\documentclass{article}
\usepackage[letterpaper, textwidth=5.0in]{geometry}

\usepackage{amssymb,amsbsy,amsmath,amsfonts,amssymb,amscd,amsthm}
\usepackage{graphicx}
\graphicspath{
  {images/}{../images/}
}
\usepackage{xcolor}
\usepackage{cancel}
\usepackage{hyperref}
\usepackage{ulem}
\usepackage{subcaption}
\usepackage{float}
\usepackage{comment}
\usepackage{authblk}

\newtheorem{dummy}{dummy}[section]

\newtheorem{proposition}[dummy]{Proposition}
\newtheorem*{Proposition*}{Proposition}
\newtheorem{lemma}[dummy]{Lemma}

\newtheorem{defi}[dummy]{Definition}
\newtheorem{Remark}[dummy]{Remark}

\title{
On the Gram matrix of standard inner products of asymmetrically-weighted Hermite functions
}
\author{
Ruiyang Dai
\thanks{%
\url{ruiyang.dai@inria.fr}}
}
\affil{\small
Project-Team Makutu, Inria, \\
University of Pau and Pays de l'Adour, \\
Pau, France.}
\date{}

\begin{document}
\maketitle
\begin{abstract}
Let \( A = (a_{n,m})_{n,m\geq0} \) denote the infinite Gram matrix 
associated with the standard \(L^2\) inner product of 
asymmetrically-weighted (AW) Hermite functions. 
We derive an explicit representation of its entries and its Cholesky factorization.
We further show that this factorization 
admits a natural interpretation 
on a scaled Bargmann--Fock basis. 
An explicit formula for the inverse of \(A\) is also obtained.
We then consider the corresponding finite Gram matrix
and analyze its asymptotic property, 
as well as that of its Schur complement. 
The analysis is motivated by numerical methods for plasma physics, 
in particular Galerkin spectral methods 
applied to the Vlasov--Poisson (VP) system. 
As an application, we demonstrate how the derived Gram matrix formulas 
and asymptotic results can be exploited in the analysis 
and implementation of a Galerkin spectral method for the VP system.
\end{abstract}

\tableofcontents
\section{Introduction}

Orthogonal polynomials serve as basis functions 
for approximation functions in a wide range of theoretical and numerical applications, 
see for example \cite{nist,szego,askey1975orthogonal}.
One popular family of 
orthogonal polynomials is the Hermite polynomials 
$(H_m)_{m\in\mathbb{N}}$. 
Throughout this paper, we adopt the convention that $0\in\mathbb{N}$.
The Hermite polynomials are orthogonal on $(-\infty, \infty)$ 
with respect to the weight  function $\exp(-v^2)$, 
equivalently,
\begin{equation}\label{eq:hermite_polynomials_orthogonality}
  \int_{\mathbb{R}} H_n(v) H_m(v) e^{-v^2} \,dv
= \sqrt{\pi} 2^m m! \,\delta_{n,m}, 
\quad n,m \in \mathbb{N}.
\end{equation}
In this paper, we are interested in 
the scaled AW Hermite functions,
which are widely used for the numerical discretization in physics.
In particular, 
an appropriate scaling in the velocity variable 
can substantially reduce the number of Hermite modes 
required to accurately represent functions with approximately Gaussian profiles, 
making the resulting spectral method more efficient 
\cite{joyce1971numerical,gottlieb1977numerical,tang1993hermite,holloway1996spectral,schumer1998vlasov,gagne1977splitting}.
Hermite functions combine Hermite polynomials with a Gaussian function 
to form a suitable set of basis functions. 
This choice is particularly appropriate for many plasma physics problems, 
where the solutions exhibit exponential decay as
$|v| \to \infty$.
We define the scaled AW Hermite functions by
\begin{equation}\label{eq:scaled_aw_hermite_functions}
  \psi_n(v)
:=\psi_{n,T}(v)
= \left( {2^n n! \sqrt{\pi T}} \right)^{-\frac12}
  {H}_n(\frac{v}{\sqrt{T}}) \sigma(v),
\quad n \geq 0.
\end{equation}
where the Gaussian function 
\[
\sigma(v) = \exp(-\dfrac{v^2}{T}),
\]
and $T>0$ is a scaling parameter.
The scaled AW Hermite functions satisfy the orthogonality
\begin{equation}\label{eq:scaled_aw_hermite_functions_orthogonality}
  \int_{\mathbb{R}} \psi_n(v) \psi_m(v) \omega(v) \,dv
= \delta_{n,m}, 
\quad n,m \in \mathbb{N},
\end{equation}
where the weight function 
\[
\omega(v) = \exp(\dfrac{v^2}{T}).
\]
The natural inner product to be considered 
is the weighted $L^2$ inner product
\begin{equation}\label{eq:L2_weighted_inner_product}
  \left(f, g \right)_{L^2(\omega(v)dv)} 
= \int_{\mathbb{R}} f(v) g(v) \omega(v) \,dv.
\end{equation}
We also consider the standard $L^2$ inner product 
of the scaled AW Hermite functions on $\mathbb{R}$.
The standard $L^2$ inner product of two scaled AW Hermite functions 
$f$ and $g$ is defined as 
\begin{equation}\label{eq:L2_inner_product}
\left(f, g \right)_{L^2(dv)} = \int_{\mathbb{R}} f(v) g(v) \,dv.
\end{equation}
The infinite Gram matrix $A$ is the collection of 
all scalar products of the scaled AW Hermite functions.
We take the first $N+1$ scaled AW Hermite functions and 
build the $(N+1)\times(N+1)$ finite Gram matrix $A_{11}^N$ with 
entries 
\[
   (A_{11}^N)_{n,m} 
:= \left(\psi_n, \psi_m \right)_{L^2(dv)},
   \quad 0 \leq n,m \leq N.
\]
The goal of this article is to analyze the infinite Gram matrix $A$ 
and its block $A_{11}^N$ which is finite-dimensional.
In particular, we are interested in the asymptotic properties of $A_{11}^N$ 
and its applicability in numerical methods, given that $A_{11}^N$ is dense 
and ill-conditioned.

The motivation for this work arises from 
the numerical simulation of plasma physics problems, 
with particular emphasis on the numerical stability of discretization schemes 
and their rigorous convergence analysis. 
Nevertheless, the results presented here are independent of 
this specific motivation and may be applicable more broadly. 
Readers interested primarily in the results, 
rather than their original motivation, 
may therefore skip the following paragraph.

The simplest kinetic model describing collisionless plasmas is the VP system. 
A wide range of numerical methods has been developed for its approximation, 
which can be broadly classified into two categories: particle-in-cell (PIC) methods \cite{birdsall2018plasma} and Eulerian methods \cite{filbet2003comparison}.
In this paper, we focus on Eulerian methods based on polynomial bases, 
leading to spectral discretizations. 
In particular, a widely used method for the numerical approximation 
of the VP system is the Petrov--Galerkin spectral method. 
In such method, the distribution function is represented 
by a finite set of orthogonal polynomials in velocity space, 
rather than by a direct discretization of the velocity variable.
This idea dates back to the pioneering works of the 1960s \cite{armstrong1966numerical,joyce1971numerical}.
Among the various polynomial bases, 
Hermite polynomials have been the most extensively employed 
in Petrov--Galerkin formulations of the VP system. 
In particular, the Petrov--Galerkin method based on AW Hermite functions has become a standard spectral approach. 
However, this formulation yields a system that is known to suffer from numerical instability 
\cite{holloway1996spectral,kormann2021generalized,bessemoulin2022stability,bessemoulin2023convergence,manzini2017convergence,funaro2021stability,pagliantini2023physics}.
Moreover, no estimate is currently available for 
the norm of the solution to the VP system 
in the corresponding scaled, time-independent weighted Sobolev space.
To address this issue, \cite{dai2024quadratic,dai2026galerkin} 
proposed a Galerkin method based on AW Hermite functions, 
since weighted $L^2(\mathbb{R})$ space 
is embedded in $L^2(\mathbb{R})$ space for certain weight functions.
More recently, \cite{dai2026convergence}
established the convergence of Galerkin approximations 
based on AW Hermite functions for the VP system, 
proving error estimates and spectral accuracy in Sobolev spaces.
While this approach improves the stability properties of the resulting scheme, 
the AW Hermite functions are no longer orthogonal under the Galerkin inner product, 
leading to the appearance of a Gram matrix in the discrete system.

The Gram matrix considered in this paper 
may be viewed as a Hermite-type case of a moment matrix. 
The study of moment matrices has a long history, 
and the asymptotic behavior of their smallest eigenvalues 
has been investigated \cite{szego1936some,beckermann2000condition,berg2002small,chen1999small,chen2004smallest,widom1966small,wilf2012finite}. 
For example, in \cite{szego1936some}, 
Szeg\"o considers the cases of infinite intervals with the weights
$\omega(x) = \exp(-x^2), \, x\in (-\infty, +\infty)$ and
$\omega(x) = \exp(-x),   \, x\in [0, +\infty)$,
which correspond to the Hermite and Laguerre polynomials, respectively.
A standard approach to studying the asymptotic behavior of 
the smallest eigenvalue is to estimate its reciprocal, 
which can be characterized by a Rayleigh quotient. 
We adopt this approach here. 
In particular, since all the entries of \((A_{11}^N)^{-1}\) are positive, 
its Rayleigh quotient can be estimated explicitly.

The remainder of the paper is organized as follows.
In Section~\ref{sec:hermite-functions}, 
we introduce the necessary notation and recall the definition and 
fundamental properties of the scaled AW Hermite functions. 
In Section~\ref{sec:infinite-gram-matrix}, 
we derive an explicit representation 
of the entries of the Gram matrix associated with 
the standard $L^2$ inner product and its Cholesky decomposition. 
We further show that the factorization of the infinite Gram matrix 
can be interpreted on a scaled Bargmann–Fock basis.
An explicit expression for the inverse of 
the infinite Gram matrix is also provided. 
In Section~\ref{sec:finite-gram-matrix}, 
we analyze the corresponding finite Gram matrix.
Section~\ref{sec:decay-properties-of-the-finite-gram-matrix} 
investigates the decay properties of 
the finite Gram matrix and those of the Schur complement.
Finally, Section~\ref{sec:conclusion} concludes the paper. 
In Appendix~\ref{sec:application-to-the-vp-system}, 
we illustrate the applicability of the results to the VP system.

\section{Hermite functions}
\label{sec:hermite-functions}

We denote by $L^2_\omega(\mathbb{R})$ the weighted Lebesgue space of 
measurable functions $f:\mathbb{R}\to\mathbb{R}$ such that
\[
   \| f \|_{L_\omega^2}^2
:= \int_{\mathbb{R}} f^2(v) \omega(v) \,dv
 = \int_{\mathbb{R}} f^2(v) \exp\left(\frac{v^2}{T}\right) \,dv
 < \infty.
\]
The associated weighted inner product is~\eqref{eq:L2_weighted_inner_product}.
The scaled AW Hermite functions~\eqref{eq:scaled_aw_hermite_functions} 
can also be defined by the recursive scheme
\begin{equation}\label{recursive_relation_v}
\begin{aligned}
& \psi_0(v) = \left( \pi T \right)^{-\frac14} \sigma(v), \quad \\
& v\psi_m(v) 
= \sqrt{\dfrac{T}{2}} 
  \left( \sqrt{m} \psi_{m-1} + \sqrt{m+1} \psi_{m+1} \right),
  \quad m = 1,2,\dots.
\end{aligned}
\end{equation}
We also have the following recursive relation
\begin{equation}\label{recursive_relation_partial_v}
   \dfrac{\partial \psi_m(v)}{\partial v} 
= -\sqrt{\dfrac{2}{T}} \sqrt{m+1} \psi_{m+1}
   \quad m = 0,1,2,\dots,
\end{equation}
The set of functions $\psi_n(v)$ is 
the $L^2_\omega(\mathbb{R})$--orthogonal system, 
namely,~\eqref{eq:scaled_aw_hermite_functions_orthogonality}.

Throughout this paper, we consider the usual Lebesgue space $L^2(\mathbb{R})$. 
Since the weight function
\[
\omega(v)=\exp\left(\frac{v^2}{T}\right)\ge 1,
\quad \forall v \in \mathbb{R},
\]
it follows that for every $f\in L^2_\omega(\mathbb{R})$,
\[
    \int_{\mathbb{R}} f^2(v) \,dv
\le \int_{\mathbb{R}} f^2(v) \omega(v) \,dv
<   \infty,
\]
which shows that
\[
L^2_\omega(\mathbb{R})\subset L^2(\mathbb{R})
\]
with continuous embedding.
We consider the inner product~\eqref{eq:L2_inner_product}.
We can see that $\psi_m(v)$ is even function if $m$ is even, 
and $\psi_m(v)$ is odd function if $m$ is odd. 
Therefore the integral of $\psi_m(v)\psi_n(v)$ over $\mathbb{R}$ vanishes, that is 
\[
\int_{\mathbb{R}} \psi_m(v)\psi_n(v) \,dv = 0, 
\quad m+n \text{ is odd}.
\] 

\section{Infinite Gram matrix}
\label{sec:infinite-gram-matrix}

\subsection{Coefficients of the infinite Gram matrix}
\begin{defi}
The infinite symmetric Gram matrix
$
A=A^T=(a_{mn})_{m,n\geq 0}\in \mathbb R^{\mathbb N\times \mathbb N}
$
is the collection of all scalar products of 
the scaled AW Hermite functions,
where the coefficients are
\[
a_{mn} = a_{nm} = \int_{\mathbb{R}} \psi_m(v) \psi_n(v) \,dv.
\]
\end{defi}

The coefficients of the matrix $A$ are $L^2$ 
scalar products of AW Hermite functions. 
These coefficients are computable in finite terms 
since the product of two AW Hermite functions 
can be expressed as a Gaussian function multiplied by a polynomial function.
However, to our knowledge,  the exact value of these coefficients 
is not available in the literature on special functions \cite{nist,szego,magnus}.
In this Section, we propose some formulas for the calculation of 
the quadratic scalar product of AW Hermite functions.

\begin{proposition}\label{prop:4.1}
If the sum of the indices $m,n$ ($m,n\ge 0$) 
is odd, then $a_{mn}=0$. 
Otherwise 
\begin{equation} \label{eq:b46}
  a_{mn}
= (-1)^{\frac{m-n}2} 
  2^{-(m+n) - \frac{1}2} 
  \frac{(m+n)!}{ \left( \frac{m+n}2 \right)! \sqrt{m!n!} }.
\end{equation}
\end{proposition}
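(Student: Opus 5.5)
The plan is to compute all the entries $a_{mn}$ at once from the generating function of the Hermite polynomials,
\[
\sum_{n\ge 0} H_n(x)\frac{s^n}{n!} = e^{2xs-s^2},
\]
and then read off the individual entries as Taylor coefficients. The parity statement ($a_{mn}=0$ for $m+n$ odd) was already obtained in Section~\ref{sec:hermite-functions} from the even/odd symmetry of $\psi_m$, and it will also come out of the computation.

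\textbf{Step 1: the generating integral.} With $\sigma(v)^2=e^{-2v^2/T}$ and the substitution $x=v/\sqrt{T}$, I would compute
\[
G(s,t):=\int_{\mathbb{R}} \Big(\sum_n H_n(x)\tfrac{s^n}{n!}\Big)\Big(\sum_m H_m(x)\tfrac{t^m}{m!}\Big) e^{-2x^2}\sqrt{T}\,dx
=\sqrt{T}\int_{\mathbb{R}} e^{2x(s+t)-s^2-t^2-2x^2}\,dx .
\]
Completing the square gives $\sqrt{\pi T/2}\,\exp\big(\tfrac{(s+t)^2}{2}-s^2-t^2\big)=\sqrt{\pi T/2}\,e^{-(s-t)^2/2}$. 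Interchanging sum and integral is justified because the Gaussian $e^{-2x^2}$ dominates $e^{2|x|(|s|+|t|)}$, so everything converges absolutely for all complex $s,t$.

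\textbf{Step 2: extract coefficients.} I would expand
\[
e^{-(s-t)^2/2}=\sum_{k\ge0}\frac{(-1)^k(s-t)^{2k}}{2^k k!}.
\]
Only total degree $2k=n+m$ contributes to $s^nt^m$, which recovers the vanishing for $m+n$ odd. For $n+m=2k$, the coefficient of $s^nt^m$ is $(-1)^{k+m}\binom{2k}{n}/(2^kk!)$. Matching with $\sum_{n,m}\frac{s^nt^m}{n!m!}\int H_nH_m\sigma^2\,dv$ yields
\[
\int_{\mathbb{R}} H_n(\tfrac{v}{\sqrt T})H_m(\tfrac{v}{\sqrt T})\sigma(v)^2\,dv=\sqrt{\tfrac{\pi T}{2}}\,(-1)^{k+m}\frac{(2k)!}{2^k k!}.
\]

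\textbf{Step 3: normalize and simplify.} I would divide by $\sqrt{2^nn!\sqrt{\pi T}\cdot 2^mm!\sqrt{\pi T}}=\sqrt{\pi T}\,2^{k}\sqrt{n!m!}$. The factors of $\pi T$ cancel, leaving $2^{-1/2}\,2^{-k}\,2^{-k}=2^{-(m+n)-1/2}$. For the sign, note that $k+m=\tfrac{n-m}{2}+2m$, so $(-1)^{k+m}=(-1)^{(n-m)/2}=(-1)^{(m-n)/2}$, since $(n-m)/2$ is an integer. Together this gives exactly \eqref{eq:b46}.

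The only delicate points are bookkeeping: getting the sign $(-1)^{k+m}$ right from the binomial expansion of $(s-t)^{2k}$, and tracking the powers of $2$. I would sanity-check the result on $m=n=0$, where $a_{00}=\int\psi_0^2\,dv=(\pi T)^{-1/2}\sqrt{\pi T/2}=2^{-1/2}$, and on $(m,n)=(1,1)$.
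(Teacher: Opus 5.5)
Your proof is correct, and it takes a genuinely different route from the paper's.

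The paper integrates by parts using the ladder relation \eqref{recursive_relation_partial_v}. This gives the anti-diagonal recurrence $\sqrt{m}\,a_{mn}=-\sqrt{n+1}\,a_{m-1,n+1}$, which it iterates to reduce every entry to $a_{0,m+n}$. It then evaluates that first-row entry separately in Lemma~\ref{lem:4.2}, using the Hermite rescaling formula for $H_m(\lambda x)$ at $\lambda=1/\sqrt2$ together with orthogonality with respect to $e^{-2v^2/T}$.

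Your bilinear generating function $G(s,t)=\sqrt{\pi T/2}\,e^{-(s-t)^2/2}$ replaces both steps with one Gaussian integral and one Taylor expansion. Parity, sign and normalization come out together, and no external rescaling identity is needed. Your coefficient $(-1)^{k+m}\binom{2k}{n}/(2^k k!)$, the power $2^{-(m+n)-1/2}$ and the sign reduction $(-1)^{k+m}=(-1)^{(m-n)/2}$ all check out.

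The two arguments are closely linked. That $G$ depends only on $s-t$, i.e.\ $(\partial_s+\partial_t)G=0$, is exactly the generating-function form of the paper's recurrence \eqref{eq:b40-rab}.

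Each route buys something different. The paper's route produces the recurrence explicitly, and Lemma~\ref{lemma:nm} later exploits it for the stable computation of the entries. Yours is shorter and self-contained, and it mirrors the generating-function computation the paper itself carries out with $\Psi_T(v,t)$ in the proof of Proposition~\ref{prop:bargmann-transform}.

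One minor precision on Step~1. The domination needed for the termwise interchange concerns $\sum_n |H_n(x)|\,r^n/n!$. This sum is bounded by $e^{2|x|r+r^2}$, by bounding the coefficients of $H_n$ in absolute value. Alternatively, you can sidestep the interchange by differentiating the entire function $G$ under the integral sign. Either way the Gaussian $e^{-2x^2}$ still dominates, so the argument stands.
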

\begin{proof}
If $m+n$ is odd, then $\psi_m \psi_n$ is equal to a Gaussian function multiplied by an odd polynomial, so its integral vanishes. In this case $a_{mn}=0$.
If $m+n$ is even, we have 
\[
  \sqrt{ \frac{2m}{T} } \  a_{mn}
= \sqrt{ \frac{2m}{T} } \ \int_{\mathbb{R}} \psi_m(v) \psi_n(v)dv.
\]
Using the general identity~\eqref{recursive_relation_partial_v}, 
we can write
\[
\begin{aligned}
   \sqrt{ \frac{2m}{T} } \  a_{mn}
&=-\int_{\mathbb{R}} \psi_{m-1}'(v) \psi_n (v) dv
 = \int_{\mathbb{R}} \psi_{m-1}(v)  \psi_n'(v) dv \\
&=-\sqrt{ \frac{2(n+1)}{T} } \int_{\mathbb{R}} \psi_{m-1}(v) \psi_{n+1}(v)dv \\
&=-\sqrt{ \frac{2(n+1)}{T} } \  a_{m-1,n+1}.
\end{aligned}
\]
That is
\begin{equation} \label{eq:b40-rab}
\sqrt{ m } \  a_{mn}=- \sqrt{ n+1 }   \  a_{m-1,n+1}.
\end{equation}
We get by iteration
$$
  {\left( m(m-1) \dots 2 \right) }^\frac12 a_{mn}
= (-1)^m { \left( n(n+1) \dots (m+n)\right)}^\frac12 a_{0,m+n},
$$
that is
\begin{equation} \label{eq:b41}
  a_{mn}
= (-1)^m \left( {\frac{(n+m)!}{n!m!}} \right)^\frac12 
  a_{0,m+n}.
\end{equation}
The technical Lemma \ref{lem:4.2} yields the value of $a_{0,m+n}$ 
from which we obtain 
$$
  a_{mn}
= (-1)^m \left( {\frac{(n+m)!}{n!m!}} \right)^\frac12
  (-1)^{(m+n)/2} 
  2^{-(m+n) - \frac{1}2}  
  \frac{ (m+n)!^\frac12} { \left( \frac{m+n}2 \right)! },
$$
that is
$$
  a_{mn}
= (-1)^{\frac{m-n}2} 
  2^{-(m+n) - \frac{1}2} 
  \frac{(m+n)!}{ \left( \frac{m+n}2 \right)! \sqrt{m!n!} }.
$$
\end{proof}
\begin{lemma} \label{lem:4.2}
If $m$ is even, we have
\[
  a_{0m}
= (-1)^{\frac{m}{2}} 2^{-m-\frac12}  
 \frac{(m!)^\frac12}{(\frac{m}{2})!}.
\]
\end{lemma}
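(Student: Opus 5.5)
The plan is to compute $a_{0m}=\int_{\mathbb{R}}\psi_0(v)\psi_m(v)\,dv$ directly from the explicit formula \eqref{eq:scaled_aw_hermite_functions}. The Hermite polynomials will be handled through their generating function
\[
e^{2xt-t^2}=\sum_{j\ge0}H_j(x)\frac{t^j}{j!},
\]
which turns the integral into a Gaussian integral. I avoid the recursion \eqref{eq:b40-rab}, since it only relates entries along anti-diagonals and cannot produce the base value $a_{0m}$ by itself.

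\emph{Step 1 (scaling).} Write $m=2k$ and substitute $x=v/\sqrt T$, so that $\sigma(v)^2=e^{-2x^2}$ and $dv=\sqrt T\,dx$. Collecting the normalising constants gives
\[
\sqrt T\,(\pi T)^{-\frac14}\bigl(2^m m!\sqrt{\pi T}\bigr)^{-\frac12}=\pi^{-\frac12}\,2^{-\frac m2}\,(m!)^{-\frac12}.
\]
Hence
\[
a_{0m}=\pi^{-\frac12}2^{-\frac m2}(m!)^{-\frac12}\,I_m,\qquad I_m:=\int_{\mathbb{R}}H_m(x)e^{-2x^2}\,dx .
\]
The parameter $T$ cancels entirely, as the stated formula requires.

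\emph{Step 2 (generating function).} Multiply the generating function by $e^{-2x^2}$ and integrate term by term. This is justified by dominated convergence, since $\sum_j |H_j(x)||t|^j/j!$ is bounded by $e^{C(|x||t|+t^2)}$, which is integrable against $e^{-2x^2}$. Completing the square, $2xt-2x^2=-2(x-t/2)^2+t^2/2$, so
\[
\sum_{j\ge0}I_j\frac{t^j}{j!}=e^{-t^2}\sqrt{\tfrac{\pi}{2}}\,e^{t^2/2}=\sqrt{\tfrac{\pi}{2}}\,e^{-t^2/2}=\sqrt{\tfrac{\pi}{2}}\sum_{k\ge0}\frac{(-1)^k}{2^k k!}t^{2k}.
\]
Identifying coefficients gives $I_{2k+1}=0$, which is consistent with the parity remark, and
\[
I_{2k}=\sqrt{\tfrac{\pi}{2}}\,(-1)^k\frac{(2k)!}{2^k k!}.
\]

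\emph{Step 3 (assembly).} Insert $I_{2k}$ into Step 1:
\[
a_{0,2k}=\pi^{-\frac12}2^{-k}((2k)!)^{-\frac12}\cdot\sqrt{\tfrac{\pi}{2}}\,(-1)^k\frac{(2k)!}{2^k k!}=(-1)^k2^{-2k-\frac12}\frac{((2k)!)^{\frac12}}{k!}.
\]
With $m=2k$ this is exactly the claimed formula.

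The only delicate point is Step 2: one must justify the interchange of sum and integral, or alternatively note that both sides are entire in $t$ and compare Taylor coefficients. The rest is bookkeeping of the powers of $2$, $\pi$ and $T$. If one prefers to avoid the generating function, a fallback is the explicit expansion $H_{2k}(x)=(2k)!\sum_{j}(-1)^{k-j}(2x)^{2j}/((2j)!(k-j)!)$ together with Gaussian moments. That route leads to a binomial sum equal to $(1-2)^k$, but it is more cumbersome.
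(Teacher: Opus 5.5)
Your proof is correct. The constants check out, and the domination $\sum_j |H_j(x)|\,|t|^j/j!\le e^{2|x||t|+t^2}$ holds: bound each $H_j$ by its explicit expansion with all signs made positive. So the interchange of sum and integral is justified.

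Your route differs from the paper's. The paper applies the cited scaling formula for $H_m(\lambda x)$ with $\lambda=1/\sqrt2$. This expands $H_m(v/\sqrt T)$ in the polynomials $H_{m-2l}(\sqrt2\,v/\sqrt T)$, which are orthogonal for the weight $e^{-2v^2/T}$. Only the constant term $l=m/2$, equal to $(-\tfrac12)^{m/2}m!/(\tfrac m2)!$, survives integration, so the result is that constant times $\int_{\mathbb R}e^{-2v^2/T}\,dv=\sqrt{\pi T/2}$. That is a short projection argument, but it relies on an external identity and on an orthogonality step the paper states only informally.

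Your generating-function computation needs only $e^{2xt-t^2}$ and one Gaussian integral. It gives every $I_j$ at once, so the odd case comes for free, and it makes the analytic justification explicit. It is also the same computation the paper does later in the Bargmann--Fock section. Since $(\mathcal B_T f)(0)=\sqrt2\,(\psi_0,f)_{L^2(\mathbb R)}$, evaluating $\mathcal B_T\psi_{n,T}=g_n$ at $z=0$ gives $\sqrt2\sum_n a_{0n}t^n/\sqrt{n!}=e^{-t^2/4}$. After rescaling $t$, this is your identity $\sum_j I_j t^j/j!=\sqrt{\pi/2}\,e^{-t^2/2}$.
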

\begin{proof}
We have
$
  \psi_0(v)\psi_m(v)
= (\pi T)^{-\frac12} (2^mm!)^{-\frac12} 
  e^{-2v^2/T}H_m\left( \frac{v}{\sqrt{T}} \right)
$. 
To be able to  perform a rescaling in this expression, 
we can use  the general formula \cite[page 255]{magnus}
$$
  H_m(\lambda x)
= \sum_{l=0}^{[\frac{m}{2}]} \lambda^{m-2l} (\lambda^{2}-1)^l  
  \frac{m!}{(m-2l))!l!}H_{m-2l}(x).
$$ 
Take $\lambda = \frac{1}{\sqrt 2}$ 
and  $x= \frac{\sqrt 2 v}{\sqrt{T}}$. 
Then
$$
  H_m\left(\frac{v}{\sqrt{T}}\right)
= \left( -  \frac{1}{2} \right)^{\frac{m}{2}}
  \frac{m!}{(\frac{m}{2})!} + R(v),
$$
where the residual $R(v)$ is orthogonal 
to the weight $e^{-\frac{2v^2}{T}}$ 
because it is a linear combination of 
Hermite polynomials of degree $\geq 1$ (with convenient weight).
We obtain
$$
  a_{0m}
= \int_{\mathbb{R}} \psi_0(v)\psi_m(v) dv 
= (\pi T)^{-\frac12} (2^mm!)^{-\frac12} 
  \left( -\frac{1}{2} \right)^{\frac{m}{2}}  
  \frac{m!}{(\frac{m}{2})!} \sqrt{\frac{T\pi}{2}},
$$
which yields the claim after simplification.
\end{proof}

Unfortunately the formula~\eqref{eq:b46} cannot be  used to calculate the coefficients 
in a stable manner because the calculation 
on the computer of the factorial of large natural numbers is difficult.
We have the following formulas which provide a stable method to calculate all coefficients.
\begin{lemma} \label{lemma:nm}
The coefficients of the Gram matrix can be evaluated with computationally stable formulas.
\\
i) To calculate the diagonal coefficients of the Gram matrix, use the recurrence formulas
\begin{equation} \label{eq:iter1}
\left\{
\begin{aligned}
& a_{00}
= 2^{-\frac12},\\
&  a_{m+1,m+1}
= \frac{2m+1}{2m+2}a_{mm}, \quad m\geq 0.
\end{aligned}
\right.
\end{equation}
ii) To calculate the upper extra-diagonal coefficients of the Gram matrix, 
use the recurrence formulas which start from the diagonal
\begin{equation} \label{eq:iter2}
  \forall m\geq 1: \quad 
  a_{m-l-1, m+l+1}
=-\sqrt{\frac{m-l}{m+l+1}} a_{m-l, m+l}, \quad 
  l=0, \dots, m-1,
\end{equation}
iii) The lower diagonal coefficients are equal to the  upper extra-diagonal coefficients
\begin{equation} \label{eq:iter3}
  \forall m\geq 1: \quad 
  a_{m+l, m-l}
= a_{m-l, m+l} \quad 
  \mbox{ for } 1 \leq l \leq  m.
\end{equation}
\end{lemma}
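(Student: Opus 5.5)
The plan is to derive all three parts of Lemma~\ref{lemma:nm} from results already established: the explicit formula~\eqref{eq:b46} of Proposition~\ref{prop:4.1}, the exchange relation~\eqref{eq:b40-rab}, and the symmetry $a_{mn}=a_{nm}$. No new integrals need to be computed. ``Computationally stable'' will be understood to mean that each coefficient is obtained from a previously computed one by multiplication with a factor of modulus at most one, so that no large factorials ever appear.

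For i), we set $n=m$ in~\eqref{eq:b46}, which gives $a_{mm}=2^{-2m-\frac12}\frac{(2m)!}{(m!)^2}$. At $m=0$ this is $2^{-\frac12}$. Taking the ratio of consecutive diagonal terms gives
\[
\frac{a_{m+1,m+1}}{a_{mm}}=\frac14\,\frac{(2m+2)(2m+1)}{(m+1)^2}=\frac{2m+1}{2m+2},
\]
which is~\eqref{eq:iter1}. Alternatively, one can check directly that the value of $a_{00}$ agrees with Lemma~\ref{lem:4.2} at $m=0$.

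For ii), we apply~\eqref{eq:b40-rab} in the form $\sqrt{m'}\,a_{m',n'}=-\sqrt{n'+1}\,a_{m'-1,n'+1}$, with $m'=m-l$ and $n'=m+l$. This yields
\[
a_{m-l-1,m+l+1}=-\sqrt{\frac{m-l}{m+l+1}}\;a_{m-l,m+l}.
\]
This holds for $m-l\ge 1$, that is, for $l=0,\dots,m-1$, which is exactly the range stated. The recursion walks along the anti-diagonal $\{(i,j): i+j=2m\}$, starting from the diagonal entry $a_{mm}$ supplied by i), and produces every entry with $i<j$ and $i+j$ even. Entries with $i+j$ odd vanish by Proposition~\ref{prop:4.1}.

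There is one indexing wrinkle to address. The anti-diagonals with $i+j=2m+1$ odd are all zero, so nothing further is needed there. Since $a_{00}$ is given, the case $m=0$ is covered, and starting at $m\ge1$ in the statement causes no loss. Part iii) is simply the symmetry $a_{mn}=a_{nm}$ of the Gram matrix, rewritten in the indices $(m+l,m-l)$.

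Finally, stability follows because every multiplier, namely $\frac{2m+1}{2m+2}$ and $\sqrt{\frac{m-l}{m+l+1}}$, lies in $(0,1]$ in absolute value. I do not expect any genuine obstacle here. The only points needing care are that the index range in ii) exactly covers each half anti-diagonal, and that this range matches the hypothesis $m\ge1$ used in deriving~\eqref{eq:b40-rab}.
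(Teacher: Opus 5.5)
Your proposal is correct and follows the paper's argument: (i) comes from the closed form \eqref{eq:b46}, (iii) from symmetry, and stability from every multiplier having modulus at most one. The only difference is that you obtain (ii) directly from the exchange relation \eqref{eq:b40-rab} rather than by taking ratios in \eqref{eq:b46}; this is equivalent, since \eqref{eq:b46} was itself obtained by iterating \eqref{eq:b40-rab}, and slightly more direct.
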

\begin{proof}
The first  set~\eqref{eq:iter1} of formulas are deduced from~\eqref{eq:b46}. 
The second set~\eqref{eq:iter2} of formulas are deduced from~\eqref{eq:b46}. 
The Gram matrix being symmetric, the symmetry~\eqref{eq:iter3} is trivial.
The computational stability of the formulas is because the only operations are multiplication by positive  numbers $\leq 1$.
\end{proof}

\subsection{Cholesky decomposition of the infinite Gram matrix}
\begin{proposition}[Cholesky decomposition of the Gram matrix]
\label{prop:Cholesky_decomposition_of_the_Gram_matrix}
The Gram matrix $A$ admits a unique Cholesky decomposition of the form
\[
A = L D L^\top,
\]
where \(L\) is a unit lower triangular matrix whose entries are given by
\begin{equation}\label{eq:entries_cholesky_gram_L}
\left\{
\begin{aligned}
&  l_{nm} = 
   (-1)^{\frac{n-m}{2}}
   \sqrt{\dfrac{n!}{m!}} 
   \dfrac{1}{4^{\frac{n-m}2}(\frac{n-m}2)!},
&& n-m \in 2\mathbb{N},  \\
&  l_{nm} = 0,
&& \text{otherwise}.\\
\end{aligned}
\right.
\end{equation}
and $D$ is a diagonal matrix given by 
\begin{equation}\label{eq:entries_cholesky_gram_D}
D = \operatorname{diag}( 2^{-m-\frac{1}{2}})_{m\geq 0}.
\end{equation}
\end{proposition}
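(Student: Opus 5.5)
We verify the identity $A = LDL^\top$ entrywise and then appeal to uniqueness. Since $L$ is unit lower triangular, each entry of $(LDL^\top)_{nm}$ is a finite sum:
\[
(LDL^\top)_{nm}=\sum_{k=0}^{\min(n,m)} l_{nk}\, d_k\, l_{mk}.
\]
Uniqueness of an $LDL^\top$ factorization with unit lower triangular $L$ and positive diagonal $D$ follows from the finite case. Every leading $(N+1)\times(N+1)$ block of $A$ is a Gram matrix of linearly independent functions, hence positive definite, and its factorization is the leading block of $L$ and $D$. So it suffices to prove the identity.

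Fix $n\ge m$. Both sides vanish when $n+m$ is odd: by the parity structure of \eqref{eq:entries_cholesky_gram_L}, a nonzero term needs $n-k$ and $m-k$ both even. Otherwise write $n=m+2p$ and $k=m-2j$ with $0\le j\le \lfloor m/2\rfloor$. Substituting \eqref{eq:entries_cholesky_gram_L} and \eqref{eq:entries_cholesky_gram_D}, the factors $\sqrt{n!/k!}\sqrt{m!/k!}=\sqrt{n!m!}/k!$ come out, and the sum becomes
\[
(-1)^{p}\sqrt{n!\,m!}\;2^{-m-\frac12}4^{-p}\sum_{j}\frac{(-1)^{j}\,2^{2j}\,4^{-2j}}{(m-2j)!\,(p+j)!\,j!}.
\]
The sign is $(-1)^{(n-k)/2+(m-k)/2}=(-1)^{p+2j}$, and the powers of $2$ combine to give $2^{-k-\frac12}4^{-(p+j)}4^{-j}=2^{-m-\frac12}4^{-p}2^{2j}4^{-2j}$. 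Comparing with \eqref{eq:b46} from Proposition~\ref{prop:4.1}, we have $(m+n)/2=m+p$ and $(-1)^{(n-m)/2}=(-1)^p$. Matching the powers of $2$ gives $2^{-(m+n)-\frac12}=2^{-m-\frac12}4^{-p}2^{-m}$. The identity therefore reduces to the combinatorial claim
\[
\sum_{j=0}^{\lfloor m/2\rfloor}\frac{(-1)^{j}4^{-j}\,2^{m}}{(m-2j)!\,(p+j)!\,j!}=\frac{(2m+2p)!}{(m+p)!\,m!\,(m+2p)!}.
\]

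This reduced sum is the main step. The plan is to recognize it through generating functions or Hermite polynomials, mirroring the proof of Lemma~\ref{lem:4.2}. Note that $\sum_j \frac{(-1)^j m!}{j!(m-2j)!}(2x)^{m-2j}=H_m(x)$, so the left side is a Hermite-type coefficient sum weighted by $1/(p+j)!$. I would try to realize $1/(p+j)!$ as a Gaussian moment, so that the sum becomes a Gaussian integral of $H_m$ against a monomial or against $H_{m+2p}$. Alternatively, I would use the generating function $e^{2xt-t^2}$ and extract a coefficient.

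A cleaner alternative avoids the sum entirely. Define $\phi_k=\sum_m (L^{-1})_{km}\psi_m$ and show these are orthogonal in $L^2(dv)$ with norms $2^{-k-\frac12}$. Equivalently, show that $\sum_k l_{nk}\phi_k=\psi_n$, where $\phi_k\propto H_k(\sqrt2 v/\sqrt T)e^{-v^2/T}$ are Hermite functions adapted to the weight $e^{-2v^2/T}$ implicit in $\psi_n\psi_m$. The required expansion is exactly the rescaling formula from \cite[page 255]{magnus} used in Lemma~\ref{lem:4.2}, with $\lambda=1/\sqrt2$, which produces coefficients $\lambda^{n-2l}(-\tfrac12)^l\frac{n!}{(n-2l)!\,l!}$.

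I expect this second route to be the one that works. The obstacle is then only bookkeeping: fix the normalization of $\phi_k$ so that the coefficients match \eqref{eq:entries_cholesky_gram_L}, with unit diagonal, and compute $\|\phi_k\|^2_{L^2(dv)}$ from \eqref{eq:hermite_polynomials_orthogonality} after the substitution $x=\sqrt2 v/\sqrt T$ to obtain $2^{-k-\frac12}$. Then $A=LDL^\top$ follows immediately, because $a_{nm}=\sum_{k}l_{nk}l_{mk}(\phi_k,\phi_k)_{L^2(dv)}$.
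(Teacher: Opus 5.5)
Your second route is correct and is, in substance, the paper's proof. The paper also expands $\psi_n$ in the $L^2(dv)$-orthogonal system $q_k(v)e^{-v^2/T}$ with $q_k\propto H_k(\sqrt{2/T}\,v)$. It writes $A=(CB)(CB)^\top$ and shows $CB=LD^{1/2}$, so your $\phi_k$ is just $2^{-k/2-1/4}q_k e^{-v^2/T}$.

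The only difference is how the connection coefficients are obtained:
\begin{itemize}
\item The paper passes through monomials (Hermite to monomial, then monomial to rescaled Hermite) and collapses the resulting alternating sum with the binomial theorem.
\item You get the coefficients in one step from the scaling formula with $\lambda=1/\sqrt2$.
\end{itemize}
Your bookkeeping does close. The normalization $\phi_k=2^{-k/2}(2^kk!\sqrt{\pi T})^{-1/2}H_k(\sqrt2 v/\sqrt T)e^{-v^2/T}$ reproduces $l_{nk}$ exactly, with $l_{kk}=1$, and gives $\|\phi_k\|_{L^2(dv)}^2=2^{-k-1/2}$. Your leading-block argument for uniqueness is also sound, and more explicit than the paper's one-line assertion.

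One correction concerns your first route. The sign of the $j$-th term is $(-1)^{p+2j}=(-1)^p$, as you computed yourself, so there is no $(-1)^j$ inside the reduced sum. With that extra sign the displayed identity is false: for $m=2$, $p=0$ the left side is $2-1=1$, while the right side is $4!/(2!\,2!\,2!)=3$. Your proposed reading of the sum as $H_m$ coefficients rests on this slip. Without the sign, the identity is exactly the paper's Lemma~\ref{lem:hypergeometric_identity}, which the paper notes follows from the Chu--Vandermonde evaluation of ${}_2F_1(-m,b;c;1)$. So that route also works, but it is unnecessary once the second route is carried out.
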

\begin{proof}
Since the Gram matrix $A$ is real, symmetric, and positive definite, 
and its diagonal entries are strictly positive,
it admits a unique Cholesky decomposition.
By the identity~\cite[18.5.13]{nist}, 
the scaled AW Hermite functions defined in~\eqref{eq:scaled_aw_hermite_functions} can be written as 
\[
  \psi_n(v)
= \sum_{k=0}^n c_{nk} 
  \left( \dfrac{v}{\sqrt T} \right)^k e^{-\frac{v^2}{T}},
\]
with
\[
\left\{
\begin{aligned}
&  c_{nk} = 
   (\pi T)^{-\frac14}
   (-1)^{\frac{n-k}{2}}
   \dfrac{\sqrt{n!}}{2^{\frac{n}2-k}(\frac{n-k}2)! k!},
&& n-k \in 2\mathbb{N},  \\
&  c_{nk} = 0,
&& \text{otherwise}.\\
\end{aligned}
\right.
\]
Hence, the infinite Gram matrix $A$ admits the factorization
\[
A = C G C^\top,
\]
where $C = (c_{nk})_{n,k\ge  0}$  and $G=(g_{kl})_{k,l\ge 0}$ is the moment matrix defined by
\[
  g_{kl} 
= \int_{\mathbb{R}} 
  \left( \dfrac{v}{\sqrt T} \right)^{k+l}
  e^{-\frac{2 v^2}{T}} \,dv
  \quad k,l \in \mathbb{N}.
\]
By the orthogonality of the Hermite polynomials~\eqref{eq:hermite_polynomials_orthogonality} written as scaled 
\[
  \int_{\mathbb{R}} 
  H_n\left( \sqrt{\frac2T} v\right) 
  H_m\left( \sqrt{\frac2T} v\right) 
  e^{-\frac{2 v^2}{T}} \,dv
= \sqrt{\dfrac{\pi T}{2}} 2^n n! \,\delta_{n,m}, 
\quad n,m \in \mathbb{N}.
\]
Let
\[
  q_n(v) 
:=\left( \sqrt{\dfrac{\pi T}{2}} 2^n n! \right)^{-\frac12} 
  H_n\left( \sqrt{\frac2T} v\right), 
  \quad n \in \mathbb{N}.
\]
Using the connection formulas for the Hermite polynomials~\cite[18.18.20]{nist}
\[
  (\sqrt2 x)^n 
= \dfrac{n!}{2^n}
  \sum_{m=0}^{\lfloor \frac{n}2 \rfloor}
  \dfrac{1}{m!(n-2m)!} H_{n-2m}(\sqrt2 x),
  \quad x \in \mathbb{R}.
\]
we obtain
\[
  \left( \dfrac{v}{\sqrt T} \right)^n 
= \left( \dfrac{\pi T}2 \right)^{\frac14} 
  \sum_{\substack{0\le k\le n \\ k\equiv n \,(\mathrm{mod} 2) }}
  \dfrac{n!}{2^{\frac{3n-k}{2}} \sqrt{k!} \left(\frac{n-k}{2}\right)! } q_k(v),
  \quad n \in \mathbb{N}.
\]
Hence, the infinite Gram matrix $A$ admits the following factorization
\[
A = C B (C B)^\top,
\]
where the matrix $B=(b_{nk})_{n,k\ge 0}$ is defined by
\[
\left\{
\begin{aligned}
&  b_{nk} = 
   \left( \dfrac{\pi T}2 \right)^{\frac14}
   \dfrac{n!}{2^{\frac{3n-k}2} \sqrt{k!} (\frac{n-k}2)!},
&& n-k \in 2\mathbb{N},  \\
&  b_{nk} = 0,
&& \text{otherwise}.\\
\end{aligned}
\right.
\]
If $n$ and $m$ have different parity, we have
\[
\sum_{k=0}^{\infty} c_{nk} b_{km} = 0.
\]
If $n$ and $m$ have the same parity, we have
\[
\begin{aligned}
 &\sum_{k=0}^{\infty} c_{nk} b_{km} \\
=&\sum_{k=m}^{n} c_{nk} b_{km} \\
=&\sum_{\substack{m\le k\le n \\ k\equiv n \,(\mathrm{mod} 2) }}
  (\pi T)^{-\frac14}
  (-1)^{\frac{n-k}{2}}
  \dfrac{\sqrt{n!}}{2^{\frac{n}2-k}(\frac{n-k}2)! k!} \,
  \left( \dfrac{\pi T}2 \right)^{\frac14}
  \dfrac{k!}{2^{\frac{3k-m}2} \sqrt{m!} (\frac{k-m}2)!} \\
=&2^{-\frac14}
  \sqrt{\dfrac{n!}{m!}}
  \dfrac{1}{2^{\frac{n-m}2}}
  \sum_{\substack{m\le k\le n \\ k\equiv n \,(\mathrm{mod} 2) }}
  (-1)^{\frac{n-k}{2}}
  \dfrac{2^{-\frac{k}2}}{ (\frac{n-k}2)! (\frac{k-m}2)!} \\
=&2^{-\frac14}
  \sqrt{\dfrac{n!}{m!}}
  \dfrac{1}{2^{\frac{n-m}2}} \,
  \left(
  (-1)^{\frac{n-m}{2}}
  \dfrac{2^{-\frac{n}2}}{ (\frac{n-m}2)! } 
  \right) \\
=&2^{-\frac{m}2-\frac14} \,
  (-1)^{\frac{n-m}{2}}
  \sqrt{\dfrac{n!}{m!}} 
  \dfrac{1}{4^{\frac{n-m}2}(\frac{n-m}2)!}.
\end{aligned}
\]
Hence, we obtain
\[
A = (C B) (C B)^\top
  = L D L^\top,
\]
where the entries of the infinite matrices $L$ and $D$ are 
given by~\eqref{eq:entries_cholesky_gram_L} 
and~\eqref{eq:entries_cholesky_gram_D}, respectively. 
\end{proof}
\begin{Remark}
Expanding the product \(LDL^\top\) entrywise 
and using the fact that its entries are equal to 
those of the infinite Gram matrix \(A\)~\eqref{prop:4.1} yields the identity
\begin{equation}\label{eq:identity_hypergeometric_series}
  \sum_{\substack{k=0 \\ k\equiv n \,(\mathrm{mod} 2) }}^{\operatorname{min}(n,m)}
  \dfrac{m!n!}{k!} 
  \dfrac{1}{2^{-k} (\frac{n-k}{2})! (\frac{m-k}{2})!}
= \dfrac{(m+n)!}{(\frac{m+n}{2})!},
\end{equation}
where $m,n$ are nonnegative integers such that $m+n$ is even.

This identity also admits a direct proof.
Indeed, after rewriting the left-hand side in terms of Pochhammer symbols,
the resulting expression can be recognized as a hypergeometric series. 
The identity then follows immediately from the special case of the hypergeometric function
\[
{}_2F_1(-m,b;c;1) = \dfrac{(c-b)_m}{(c)_m}.
\]

Thus, the Cholesky decomposition of the infinite Gram matrix $A$ 
can also be proved by expanding the product $L D L^\top$ explicitly 
and using the identity~\eqref{eq:identity_hypergeometric_series}.
\end{Remark}
\begin{lemma}\label{lem:hypergeometric_identity}
Let $m,n$ be nonnegative integers such that $m+n$ is even. Then
\begin{equation}\label{eq:identity_hypergeometric_series}
\sum_{\substack{k=0\\ k\equiv n\,(\mathrm{mod}\,2)}}^{\min(m,n)}
\frac{m!\,n!}{k!}
\frac{2^k}
{\left(\frac{n-k}{2}\right)!
 \left(\frac{m-k}{2}\right)!}
=
\frac{(m+n)!}
{\left(\frac{m+n}{2}\right)!}.
\end{equation}
\end{lemma}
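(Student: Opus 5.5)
The plan is to prove the identity with exponential generating functions rather than by manipulating the hypergeometric series term by term. For nonnegative integers $n,m$ with $m+n$ even, let $S_{n,m}$ denote the left-hand side of \eqref{eq:identity_hypergeometric_series}. If $m+n$ is odd, set $S_{n,m}:=0$. We then form the double series
\[
F(x,y) := \sum_{n,m\ge 0} \frac{S_{n,m}}{n!\,m!}\, x^n y^m ,
\]
and compare it with the analogous series built from the right-hand side. All series involved are entire, so the manipulations below are justified by absolute convergence. It therefore suffices to show that both series equal the same function and then compare coefficients.

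For the left-hand side, divide by $n!\,m!$. The summand becomes
\[
\frac{2^k}{k!\,\left(\frac{n-k}{2}\right)!\,\left(\frac{m-k}{2}\right)!}.
\]
We reparametrise by $n = k+2a$ and $m = k+2b$ with $k,a,b\ge 0$. The condition $k\equiv n \pmod 2$ together with $m+n$ even is exactly the condition that both $a$ and $b$ are integers. The constraint $k\le\min(m,n)$ is exactly $a,b\ge 0$. Hence the map $(k,a,b)\mapsto (n,m)$ enumerates each admissible term exactly once, and pairs with $m+n$ odd never arise, consistent with $S_{n,m}=0$ there. The triple sum then factorises:
\[
F(x,y) = \sum_{k\ge0}\frac{(2xy)^k}{k!}\sum_{a\ge0}\frac{x^{2a}}{a!}\sum_{b\ge0}\frac{y^{2b}}{b!} = e^{2xy}e^{x^2}e^{y^2} = e^{(x+y)^2}.
\]

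For the right-hand side, group the terms with $n+m=2s$. Using the binomial theorem,
\[
\sum_{n+m=2s}\frac{(2s)!}{s!}\frac{x^ny^m}{n!\,m!} = \frac{(x+y)^{2s}}{s!}.
\]
Summing over $s\ge0$ gives $e^{(x+y)^2}$ again. The right-hand-side series only contains pairs with $n+m$ even, matching the convention $S_{n,m}=0$ for $m+n$ odd. Comparing the coefficients of $x^ny^m$ then yields \eqref{eq:identity_hypergeometric_series} for every pair with $m+n$ even.

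The only point that needs real care is the bookkeeping in the reindexing step. One must check that $(k,a,b)\mapsto(n,m)$ is a bijection onto the index set of the double sum, so that the parity condition and the upper limit $\min(m,n)$ are both accounted for with no double counting. Once that is verified, the rest is exponential series algebra.

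As an independent check, one could instead take $n\le m$, write $n=2p+\varepsilon$, $m=2q+\varepsilon$ and $k=2j+\varepsilon$. The sum can then be rewritten with Pochhammer symbols and evaluated by the Chu--Vandermonde identity ${}_2F_1(-p,b;c;1)=(c-b)_p/(c)_p$, as indicated in the preceding Remark. The generating-function argument avoids that case analysis.
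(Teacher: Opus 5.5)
Your proof is correct, but it takes a different route from the paper. The paper gives no standalone argument for this lemma. In the Remark preceding it, the identity is obtained by expanding $(LDL^\top)_{nm}$ entrywise and equating the result with the closed form \eqref{eq:b46} of $a_{nm}$. It is therefore a by-product of two independent computations of the Gram matrix:
\begin{itemize}
\item Proposition~\ref{prop:4.1}, which rests on the derivative recurrence and the rescaling formula in Lemma~\ref{lem:4.2};
\item Proposition~\ref{prop:Cholesky_decomposition_of_the_Gram_matrix}, which rests on the connection formulas and an alternating binomial sum.
\end{itemize}
As a second option, the paper sketches rewriting the sum with Pochhammer symbols and applying the Chu--Vandermonde evaluation of ${}_2F_1(-m,b;c;1)$. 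That route needs the parity cases $n=2p+\varepsilon$, $m=2q+\varepsilon$ handled separately, and the paper leaves the details out.

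Your exponential generating function argument reduces everything to $e^{x^2}e^{2xy}e^{y^2}=e^{(x+y)^2}$. It is self-contained, uses neither the Hermite machinery nor a hypergeometric summation theorem, and treats all parities at once. It also makes the identity logically independent of the Gram matrix. As the Remark observes, it can then be run in the other direction: combined with \eqref{eq:b46}, it verifies $A=LDL^\top$ directly, without the $CB$ computation.

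What the paper's route buys in exchange is that the identity appears as the entrywise content of the Cholesky factorization. That is the form in which it is used later: the splitting $C_1+C_2=1$ in Proposition~\ref{prop:maximum_absolute_entry} and the Schur complement computation in Lemma~\ref{lem:schur-complement-A11}.

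One wording fix: the map $(k,a,b)\mapsto(n,m)$ is not injective. The bijection your argument actually uses is $(k,a,b)\mapsto(n,m,k)$, from $\mathbb{N}^3$ onto the set of admissible triples indexing the nested sum. Your earlier phrase, that each admissible term is enumerated exactly once, is the correct statement.
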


\subsection{Bargmann--Fock space interpretation}

Let $\mathcal{F}^2(\mathbb{C})$ denote the classical Bargmann--Fock
space
\[
\mathcal{F}^2(\mathbb{C})
=
\left\{
F\in\operatorname{Hol}(\mathbb{C}) :
\|F\|_{\mathcal F}^2
=
\frac1\pi
\int_{\mathbb C}|F(z)|^2e^{-|z|^2}\,dA(z)<\infty
\right\},
\]
equipped with the inner product
\[
(F,G)_{\mathcal F}
=
\frac1\pi
\int_{\mathbb C}
F(z)\overline{G(z)}e^{-|z|^2}\,dA(z).
\]
For $n\in\mathbb N$, define
\[
g_n(z)
=
\exp\left(-\frac14\partial_z^2\right)
\frac{z^n}{\sqrt{n!}}.
\]
Since the exponential differential operator acts finitely on every
polynomial, we have 
\[
  g_n(z)
= \sum_{k=0}^{\lfloor n/2\rfloor}
  \frac{\sqrt{n!}}
  {k!\sqrt{(n-2k)!}}
  \left(-\frac14\right)^k
  \frac{z^{n-2k}}{\sqrt{(n-2k)!}}.
\]
Define the $T$-dependent Bargmann transform by
\[
(\mathcal B_Tf)(z)
=
\frac{\sqrt2}{(\pi T)^{1/4}}
\int_{\mathbb R}
\exp\left(
-z^2+2\sqrt{\frac2T}\,zv-\frac{v^2}{T}
\right)f(v)\,dv,
\]
and introduce the rescaled transform
\[
  (\mathcal S_Tf)(z)
:=(\mathcal B_Tf)\left(\frac z{\sqrt2}\right).
\]
\begin{proposition}\label{prop:bargmann-transform}
For every $n\in\mathbb N$,
\[
\mathcal B_T\psi_{n,T}=g_n.
\]
Moreover, 
$\mathcal S_T$ is injective from
$L^2(\mathbb R)$ onto $\mathcal F^2(\mathbb C)$ and satisfies
\[
(\mathcal S_Tf,\mathcal S_Th)_{\mathcal F}
=
\sqrt2\,(f,h)_{L^2(\mathbb R)},
\qquad f,h\in L^2(\mathbb R).
\]
Consequently, $2^{-1/4}\mathcal S_T$ is a unitary operator from
$L^2(\mathbb R)$ onto $\mathcal F^2(\mathbb C)$.
\end{proposition}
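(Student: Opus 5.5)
Two things have to be established: first the identity $\mathcal B_T\psi_{n,T}=g_n$ for every $n$, and then that $\mathcal S_T$ is, up to the factor $\sqrt2$ in the inner product, an isometry of $L^2(\mathbb R)$ onto $\mathcal F^2(\mathbb C)$. Both will come from a generating-function computation followed by a comparison of Gram matrices via Proposition~\ref{prop:Cholesky_decomposition_of_the_Gram_matrix}.

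\emph{Step 1: the identity $\mathcal B_T\psi_{n,T}=g_n$.} I use the Hermite generating function $\sum_n H_n(x)t^n/n! = e^{2xt-t^2}$. From \eqref{eq:scaled_aw_hermite_functions} this gives
\[
\sum_{n\ge0}\frac{t^n}{\sqrt{n!}}\,\bigl(2^{n}\sqrt{\pi T}\bigr)^{\frac12}\psi_{n}(v)=\exp\Bigl(\frac{2vt}{\sqrt T}-t^2-\frac{v^2}{T}\Bigr).
\]
To avoid tracking constants, I will rescale as $s=\sqrt2\,t$ and apply $\mathcal B_T$ termwise. This is justified by dominated convergence, since all integrands are Gaussian. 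The transform of the right-hand side is a single Gaussian integral. Its exponent is
\[
-z^2+2\sqrt{2/T}\,zv+\frac{2vt}{\sqrt T}-t^2-\frac{2v^2}{T}.
\]
Completing the square in $v$ should produce $\sqrt{\pi T/2}\,\exp(\text{quadratic in } z,t)$. On the other side, $\sum_n g_n(z)\,s^n/\sqrt{n!}=e^{-\frac14\partial_z^2}e^{sz}=e^{sz-s^2/4}$. The claim then reduces to checking that the quadratic form from the Gaussian integral equals $sz-s^2/4$ with the right prefactor, and comparing coefficients of $s^n$ gives $\mathcal B_T\psi_n=g_n$. I expect the exact bookkeeping of the normalizing factors ($\sqrt2$, $(\pi T)^{-1/4}$, $2^{n/2}$) to be the main place where errors can creep in. I will double-check it by testing the case $n=0$ directly, where $g_0=1$.

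\emph{Step 2: Gram matrices and the isometry identity.} It follows that $\mathcal S_T\psi_n(z)=g_n(z/\sqrt2)$. In $\mathcal F^2$ the monomials $e_k=z^k/\sqrt{k!}$ are orthonormal, and $e_k(z/\sqrt2)=2^{-k/2}e_k(z)$. Substituting into the expansion of $g_n$ gives
\[
\mathcal S_T\psi_n=\sum_{k}\tilde l_{n,n-2k}\,e_{n-2k},\qquad \tilde l_{nm}=(-1)^{\frac{n-m}2}\sqrt{\tfrac{n!}{m!}}\,\frac{4^{-\frac{n-m}{2}}}{(\frac{n-m}2)!}\,2^{-m/2}.
\]
This is exactly $l_{nm}\,2^{-m/2}$, with $l_{nm}$ the entries in \eqref{eq:entries_cholesky_gram_L}. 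Hence
\[
(\mathcal S_T\psi_n,\mathcal S_T\psi_m)_{\mathcal F}=\sum_j l_{nj}\,2^{-j}\,l_{mj}=\sqrt2\,(LDL^\top)_{nm}=\sqrt2\,a_{nm},
\]
using $D=\operatorname{diag}(2^{-j-1/2})$ from Proposition~\ref{prop:Cholesky_decomposition_of_the_Gram_matrix}. So the identity $(\mathcal S_Tf,\mathcal S_Th)_{\mathcal F}=\sqrt2\,(f,h)_{L^2}$ holds on the span of the $\psi_n$.

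\emph{Step 3: extension to $L^2$, injectivity and surjectivity.} The span of the $\psi_n$ consists of polynomials times $e^{-v^2/T}$, and this set is dense in $L^2(\mathbb R)$ by the standard Hermite completeness argument. The map $\mathcal S_T$ is bounded on $L^2$, since its kernel is Gaussian in $v$ for fixed $z$, and it is continuous pointwise in $z$. Therefore the isometry identity extends to all of $L^2$, and injectivity follows. For surjectivity: $L$ is unit lower triangular, so the span of $\{\mathcal S_T\psi_n\}$ contains every $e_k$, i.e. all polynomials. Polynomials are dense in $\mathcal F^2$, and the range of an isometry-up-to-constant is closed, so the range is all of $\mathcal F^2$. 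Hence $2^{-1/4}\mathcal S_T$ is unitary.
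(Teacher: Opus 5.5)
Your proposal is correct. Step~1 is the same generating-function computation the paper uses, and your constants do close up: the transform of the right-hand side is $(\pi T)^{1/4}e^{\sqrt2\,tz-t^2/2}=(\pi T)^{1/4}e^{sz-s^2/4}$, and this matches the factor $(\pi T)^{1/4}$ on the left.

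For the isometry and surjectivity you take a genuinely different route.

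\textbf{The paper's route.} It substitutes $v=\sqrt{T/2}\,y$ to write $\mathcal S_Tf=\mathcal B_0q$ with $q(y)=T^{1/4}f(\sqrt{T/2}\,y)$. It then checks $(q_1,q_2)_{L^2}=\sqrt2\,(f_1,f_2)_{L^2}$ and imports the known unitarity of the classical Bargmann transform $\mathcal B_0$. This gives the identity on all of $L^2$ at once, with surjectivity for free, but it rests on that external theorem.

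\textbf{Your route.} You make the earlier factorization $A=LDL^\top$ do the work. The expansion $\mathcal S_T\psi_n=\sum_m l_{nm}2^{-m/2}e_m$ gives the isometry on the span of the $\psi_n$, and density extends it. Surjectivity then follows from the triangularity of $L$, the density of polynomials in $\mathcal F^2$, and the closedness of the range of an isometry up to a constant.

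\textbf{What each buys.} Your argument is self-contained relative to the paper; in effect it reproves the unitarity of $\mathcal B_0$ in this scaled setting. The price is that the paper's subsequent derivation of $A=LDL^*$ from the Fock-space isometry becomes a restatement of Proposition~\ref{prop:Cholesky_decomposition_of_the_Gram_matrix}, not an independent confirmation of it. There is no circularity, since that proposition was proved separately via the moment matrix.

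\textbf{One point to tighten in Step~3.} The sentence ``$\mathcal S_T$ is bounded on $L^2$, since its kernel is Gaussian in $v$'' asserts what you are proving. The Gaussian kernel does not directly give boundedness into $\mathcal F^2$. What it gives is that each evaluation $f\mapsto(\mathcal S_Tf)(z)$ is a bounded functional on $L^2$, by Cauchy--Schwarz. Now take $f_k$ in the span with $f_k\to f$ in $L^2$; then $\mathcal S_Tf_k$ is Cauchy in $\mathcal F^2$. Since point evaluations are also bounded on $\mathcal F^2$, its $\mathcal F^2$-limit coincides pointwise with the integral $\mathcal S_Tf$. That identification is what lets you conclude $\mathcal S_Tf\in\mathcal F^2$ with $\|\mathcal S_Tf\|_{\mathcal F}^2=\sqrt2\,\|f\|_{L^2}^2$, rather than merely obtaining some abstract extension of $\mathcal S_T$.
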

\begin{proof}
Consider the generating function
\[
\Psi_T(v,t)
=
\sum_{n=0}^{\infty}
\psi_{n,T}(v)\frac{t^n}{\sqrt{n!}}.
\]
Using the generating function of the Hermite polynomials,
\[
  \sum_{n=0}^{\infty} H_n(x)\frac{u^n}{n!}
= e^{2xu-u^2}, 
\quad \text{ with }
x = \frac{v}{\sqrt T},\, 
u = \frac{t}{\sqrt 2},
\]
we obtain
\[
\Psi_T(v,t)
=
(\pi T)^{-1/4}
\exp\left(
-\frac{v^2}{T}
+\sqrt{\frac2T}\,tv
-\frac{t^2}{2}
\right).
\]
Therefore,
\[
\begin{aligned}
  (\mathcal B_T\Psi_T(\cdot,t))(z)
&=\frac{\sqrt2}{\sqrt{\pi T}}
  e^{-z^2-t^2/2}
  \int_{\mathbb R}
  \exp\left(
  -\frac{2v^2}{T}
  +\sqrt{\frac2T}(2z+t)v
  \right)\,dv \\
&=e^{-z^2-t^2/2}
  \exp\left(\frac{(2z+t)^2}{4}\right) \\
&=\exp\left(tz-\frac{t^2}{4}\right).
\end{aligned}
\]
On the other hand,
\[
  \sum_{n=0}^{\infty} g_n(z)\frac{t^n}{\sqrt{n!}}
= \exp\left(-\frac14\partial_z^2\right)e^{tz} 
= \exp\left(tz-\frac{t^2}{4}\right).
\]
Comparison of the coefficients of $t^n/\sqrt{n!}$ yields
\[
\mathcal B_T\psi_{n,T}=g_n.
\]
It remains to establish the norm identity. Let $\mathcal B_0$ denote
the standard unitary Bargmann transform,
\[
(\mathcal B_0q)(z)
=
\pi^{-1/4}
\int_{\mathbb R}
e^{-y^2/2+\sqrt2yz-z^2/2}q(y)\,dy.
\]
In the integral defining $\mathcal S_T$, performing the substitution
\(
v=\sqrt{\frac T2}\,y,
\)
we obtain
\[
  (\mathcal S_Tf)(z)
= (\mathcal B_0q)(z),
\]
where
\[
  q(y)
= T^{1/4} f\left(\sqrt{\frac T2}\,y\right).
\]
A change of variables gives
\[
\begin{aligned}
(q_1,q_2)_{L^2(\mathbb R)}
&=
T^{1/2}
\int_{\mathbb R}
f_1\left(\sqrt{\frac T2}\,y\right)
f_2\left(\sqrt{\frac T2}\,y\right)
\,dy \\
&=
\sqrt2\,
(f_1,f_2)_{L^2(\mathbb R)}.
\end{aligned}
\]
The unitarity of $\mathcal B_0$ therefore implies
\[
  (\mathcal S_T f_1, \mathcal S_T f_2)_{\mathcal F}
= (\mathcal B_0 q_1, \mathcal B_0 q_2)_{\mathcal F} 
= (q_1,q_2)_{L^2(\mathbb R)} 
= \sqrt2\,(f_1, f_2)_{L^2(\mathbb R)}.
\]
Consequently,
\[
\left\|2^{-1/4}\mathcal S_Tf\right\|_{\mathcal F}
=
\|f\|_{L^2(\mathbb R)},
\]
and the surjectivity of $\mathcal B_0$, together with the invertibility
of the dilation
\[
f(v)\longmapsto
T^{1/4}f\left(\sqrt{\frac T2}\,v\right),
\]
shows that $2^{-1/4}\mathcal S_T$ is unitary.
\end{proof}
We now consider the infinite Gram matrix $A$. 
By Proposition~\ref{prop:bargmann-transform}, 
we have 
\[
\left(
g_m\left(\frac{\cdot}{\sqrt2}\right),
g_n\left(\frac{\cdot}{\sqrt2}\right)
\right)_{\mathcal F}
=
\sqrt2\,
(\psi_{m,T},\psi_{n,T})_{L^2(\mathbb R)},
\qquad m,n\in\mathbb N.
\]
It follows that
\[
A
=
\frac1{\sqrt2}
\left(
\left(
g_m\left(\frac{\cdot}{\sqrt2}\right),
g_n\left(\frac{\cdot}{\sqrt2}\right)
\right)_{\mathcal F}
\right)_{m,n\geq0}.
\]
To express this Gram matrix in terms of the normalized monomial basis,
write
\[
  g_n\left(\frac z{\sqrt2}\right)
= \sum_{j=0}^{n}
  \ell_{nj}
  \frac{(z/\sqrt2)^j}{\sqrt{j!}},
\]
Since the normalized monomials form an orthonormal basis of
$\mathcal F^2(\mathbb C)$,
\[
\left(
\frac{z^j}{\sqrt{j!}},
\frac{z^k}{\sqrt{k!}}
\right)_{\mathcal F}
=
\delta_{jk}.
\]
Consequently,
the infinite Gram matrix therefore admits the factorization
\[
  A 
= 
  LDL^*.
\]
This identity is initially understood as an identity of sesquilinear
forms on the space of finitely supported sequences.

\subsection{Inverse of the infinite Gram matrix}

The matrix \(L\) represents, on the normalized monomial basis,
the action of
\(
\exp\left(-\frac14\partial_z^2\right).
\)
Its inverse therefore represents
\(
\exp\left(\frac14\partial_z^2\right).
\)
Indeed,
\[
\begin{aligned}
  \exp\left(\frac14\partial_z^2\right)
  \frac{z^n}{\sqrt{n!}}
&=\sum_{k=0}^{\lfloor n/2\rfloor}
  \frac{1}{4^k k!}\,
  \partial_z^{2 k}\frac{z^n}{\sqrt{n!}} \\
&=\sum_{k=0}^{\lfloor n/2\rfloor}
  \frac{\sqrt{n!}}
  {4^k k!\sqrt{(n-2k)!}}\,
  \frac{z^{n-2k}}{\sqrt{(n-2k)!}} \\
&=\sum_{\substack{0\le m\le n \\ m\equiv n \,(\mathrm{mod} 2) }}
  \sqrt{\dfrac{n!}{m!}}\dfrac{1}{4^{\frac{n-m}{2}} (\frac{n-m}{2})!} 
  \frac{z^m}{\sqrt{m!}}. \\
\end{aligned}
\]
Inspired by the operator $\exp\left(\frac14\partial_z^2\right)$, 
we give the following definition.
\begin{defi}[Gram kernel matrix]\label{defi:gram_kernel_matrix}
The infinite Gram kernel matrix 
$Z = (z_{mn})_{m,n \geq 0} \in \mathbb{R}^{\mathbb{N} \times \mathbb{N}}$ of the problem 
is an upper triangular matrix of which the entries are
\[
\left\{
\begin{aligned}
&  z_{mn} = \sqrt{\dfrac{n!}{m!}} 
   \dfrac{1}{4^{\frac{n-m}2}(\frac{n-m}2)!},
&& n-m \in 2\mathbb{N},  \\
&  z_{mn} = 0,
&& \text{otherwise}.\\
\end{aligned}
\right.
\]
\end{defi}
\begin{lemma}
The Gram kernel matrix $Z$ satisfies
\[
Z L^\top = L^\top Z = I,
\]
where $I$ denotes the infinite identity matrix. 
\end{lemma}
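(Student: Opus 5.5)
Both products are infinite, but $Z$ is upper triangular and $L^\top$ is upper triangular, so every entry of $ZL^\top$ and $L^\top Z$ is a finite sum. It is therefore enough to verify entrywise that $(ZL^\top)_{mn}=\delta_{mn}$ and $(L^\top Z)_{mn}=\delta_{mn}$.

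From~\eqref{eq:entries_cholesky_gram_L} we have $(L^\top)_{kn}=l_{nk}$. This entry is nonzero only when $n-k\in2\mathbb N$, and then
\[
(L^\top)_{kn}=(-1)^{\frac{n-k}{2}}\sqrt{\tfrac{n!}{k!}}\,\tfrac{1}{4^{\frac{n-k}{2}}(\frac{n-k}{2})!}.
\]
The entry $z_{mk}$ is the same expression without the sign. Hence, for $n-m\in 2\mathbb N$ and $p=\frac{n-m}{2}$, writing $k=m+2j$, the square-root factors telescope:
\[
(ZL^\top)_{mn}=\sum_{j=0}^{p} \sqrt{\tfrac{k!}{m!}}\sqrt{\tfrac{n!}{k!}}\,\frac{(-1)^{p-j}}{4^{p}\, j!\,(p-j)!}
=\sqrt{\tfrac{n!}{m!}}\,\frac{1}{4^p p!}\sum_{j=0}^p\binom{p}{j}(-1)^{p-j}=\sqrt{\tfrac{n!}{m!}}\,\frac{(1-1)^p}{4^p p!}.
\]
This vanishes for $p\ge1$ and equals $1$ for $p=0$. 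If $n-m$ is odd or negative, no index $k$ satisfies both parity constraints $k-m\in2\mathbb N$ and $n-k\in2\mathbb N$, so the sum is empty and the entry is $0$.

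The product $L^\top Z$ is handled by the identical computation with the sign attached to the first factor, giving $(-1)^{j}$ in place of $(-1)^{p-j}$. The binomial sum is again $(1-1)^p$.

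Conceptually this is just the operator identity $\exp(\tfrac14\partial_z^2)\exp(-\tfrac14\partial_z^2)=\mathrm{Id}$ on polynomials, read in the normalized monomial basis. This is consistent with the computation preceding the definition of $Z$, and the two exponentials commute. The only care needed is matching the index conventions, i.e.\ that $Z$ represents the operator by columns in the same way $L^\top$ does. The explicit binomial computation above sidesteps that issue.
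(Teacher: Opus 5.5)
Your proof is correct and follows essentially the same route as the paper: an entrywise computation in which, after reindexing, the square-root factors telescope and the remaining sum is the binomial expansion of $(1-1)^p$. The only difference is that the paper writes out just one product (as $Z^\top L$, the transpose of $L^\top Z$) and leaves $ZL^\top = I$ implicit, whereas you verify both $ZL^\top$ and $L^\top Z$ explicitly.
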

\begin{proof}
By the definitions of $Z$ and $L$,
\[
  (Z^\top L)_{nm}
= \sum_{k=m}^n z_{kn} l_{km}
= \sum_{k=m}^n (-1)^{\frac{m-k}{2}}z_{kn} z_{mk}.
\]
The summand is nonzero only when the factorials
appearing in the definitions of $z_{kn}$ and $z_{mk}$ are well-defined.
Thus, the sum is nonzero only if $n \geq m$ and $n - m$ is even.
In that case,
\[
  (Z^\top L)_{nm}
= \sum_{k=m}^{n} (-1)^{\frac{m-k}{2}}
  \sqrt{ \dfrac{n!}{m!} }
  \dfrac{1}{4^{\frac{n-m}{2}} (\frac{n-k}{2})! (\frac{k-m}{2})!}.
\]
Introducing
$ r = \frac{n-m}{2} $,
$ j = \frac{n-k}{2} $,
we have
$ k = n - 2j $, and
$ j = 0,1,\ldots,r$.
Consequently,
\[
  (Z^\top L)_{nm}
= \sqrt{\frac{n!}{m!}}\,
  \frac{1}{4^r}
  \sum_{j=0}^{r} \frac{(-1)^{j-r}}{j!(r-j)!}.
\]
Thus,
for \( n > m \), the sum vanishes.
For \( n = m \), the sum givs \(1\).
\end{proof}
\begin{lemma}[Decomposition of the inverse Gram matrix]
\label{lem:decomposition of the inverse Gram matrix}
The inverse Gram matrix $A^{-1}$ admits a decomposition of the form
\[
A^{-1} = Z D^{-1} Z^\top,
\]
where the matrix $Z$, $D$ are given by~Definition \ref{defi:gram_kernel_matrix} 
and Equation~\eqref{eq:entries_cholesky_gram_D}.
\end{lemma}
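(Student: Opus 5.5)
The natural plan is to invert the Cholesky factorization of Proposition~\ref{prop:Cholesky_decomposition_of_the_Gram_matrix} factor by factor. From $A = LDL^\top$ we formally get $A^{-1} = (L^\top)^{-1} D^{-1} L^{-1}$. Two of these inverses are immediate:
\begin{itemize}
\item The preceding lemma gives $ZL^\top = L^\top Z = I$, so $Z$ is a two-sided inverse of $L^\top$.
\item Transposing that identity gives $LZ^\top = Z^\top L = I$, so $Z^\top$ is a two-sided inverse of $L$.
\item Since $D=\operatorname{diag}(2^{-m-\frac12})$ has strictly positive entries, $D^{-1}=\operatorname{diag}(2^{m+\frac12})$.
\end{itemize}
The candidate inverse is therefore $Z D^{-1} Z^\top$.

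The verification should be written so that every matrix product is a finite sum. I would check $A\,(ZD^{-1}Z^\top)=I$ by regrouping as
\[
(LDL^\top)(ZD^{-1}Z^\top) = LD(L^\top Z)D^{-1}Z^\top = LDD^{-1}Z^\top = LZ^\top = I,
\]
and $(ZD^{-1}Z^\top)A = I$ symmetrically, using $Z^\top L = I$ and $L^\top Z = I$.

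The main obstacle is justifying associativity for infinite matrices. Most products here are harmless: $L$ is lower triangular and $Z^\top$ is lower triangular, so each entry of $LD$, $L^\top Z$, $D^{-1}Z^\top$ and $LZ^\top$ is a finite sum. The delicate product is $A\cdot Z$: $A$ is full, and $Z$ is upper triangular, so each column of $Z$ has finitely many nonzero entries. Hence $(AZ)_{ij}=\sum_{k\le j} a_{ik}z_{kj}$ is also a finite sum.

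To regroup legitimately, I would expand
\[
\sum_k (LDL^\top)_{ik} z_{kj} = \sum_k \sum_{p\le \min(i,k)} l_{ip}\, d_p\, l_{kp}\, z_{kj}.
\]
For fixed $i$ and $j$, this double sum runs over $p\le i$ and $k\le j$ only, so it is finite and the order of summation can be exchanged freely. The same finite-support argument applies at each step: every column of $Z$ and every row of $L$ is finitely supported.

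Alternatively, one can interpret everything as sesquilinear forms on finitely supported sequences, consistent with the Bargmann--Fock remark. In that picture, $L$ and $Z^\top$ act as $\exp(\mp\frac14\partial_z^2)$ on polynomials, which are mutually inverse on the polynomial space, and the identity $A^{-1}=ZD^{-1}Z^\top$ holds there. Finally, I would note that the entries of $ZD^{-1}Z^\top$ are themselves finite sums: $\sum_{k\ge \max(m,n)} z_{mk}\,2^{k+\frac12} z_{nk}$ would in general be infinite, but in the order $ZD^{-1}Z^\top$ the shared index runs over $k \le \min(m,n)$ rather than $k \ge \max(m,n)$. Confirming this index bookkeeping is the point I expect to need the most care, and it settles that the claimed inverse is well defined entrywise.
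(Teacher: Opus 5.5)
Your first paragraph is exactly the paper's proof. The paper writes $A^{-1}=(L^\top)^{-1}D^{-1}L^{-1}$, substitutes $Z=(L^\top)^{-1}$ from the preceding lemma, and then only remarks that the identity should be read on finitely supported sequences, not as a bounded operator on $\ell^2$.

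The problem is the rigor layer you add, whose decisive step is false. Because $Z$ is upper triangular, $z_{mk}\neq 0$ forces $k\ge m$, so
\[
(ZD^{-1}Z^\top)_{mn}=\sum_{k\ge \max(m,n)} z_{mk}\,2^{k+\frac12}\,z_{nk}
\]
is an infinite series. The finite range $k\le\min(m,n)$ you have in mind belongs to $LDL^\top$, not to $ZD^{-1}Z^\top$.

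Worse, the series diverges. For $m=n=0$ the terms are $z_{0,2r}^2\,2^{2r+\frac12}=\sqrt2\binom{2r}{r}4^{-r}\sim\sqrt{2/(\pi r)}$, so $(ZD^{-1}Z^\top)_{00}=+\infty$. Consequently the regrouping $(LDL^\top)(ZD^{-1}Z^\top)=LD(L^\top Z)D^{-1}Z^\top$ is not a manipulation of finite sums. The product $Z\cdot(D^{-1}Z^\top)$ contracts infinitely supported rows of $Z$ against infinitely supported columns of $Z^\top$, and this is exactly where your ``finite support at each step'' claim breaks.

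Your finite-sum computation $AZ=LD$ is correct. But it only gives $(AZ)(D^{-1}Z^\top)=LZ^\top=I$ in that particular bracketing, not $A\,(ZD^{-1}Z^\top)=I$.

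The sesquilinear-form reading (which the paper's remark also invokes) does not rescue the argument. $Z^\top e_0$ is the coefficient sequence of $e^{z^2/4}$ in the normalized monomials, and $e_0^\top ZD^{-1}Z^\top e_0=\sqrt2\,\|e^{z^2/2}\|_{\mathcal F}^2=+\infty$.

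What can actually be proved, and what the paper uses later, is one of two statements.
(a) The algebraic statement: among upper (resp.\ lower) triangular infinite matrices, where all products are finite sums, $Z$ and $Z^\top$ are two-sided inverses of $L^\top$ and $L$. So $ZD^{-1}Z^\top$ is the formal inverse of $LDL^\top$.
(b) The finite-section identity $(A_{11}^N)^{-1}=Z_{11}D_{11}^{-1}Z_{11}^\top$ for every $N$. Here $L_{12}=0$ gives $A_{11}=L_{11}D_{11}L_{11}^\top$. The $(1,1)$ block of $ZL^\top=I$ reduces to $Z_{11}L_{11}^\top=I$ because $L^\top_{21}=0$, and everything is then finite-dimensional.

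Form (b) is what the trace estimate and the bulk-region lemma use, where the sum runs over $\max(i,j)\le k\le N$. Note that $((A_{11}^N)^{-1})_{00}=\sqrt2\sum_{r\le N/2}\binom{2r}{r}4^{-r}\to\infty$, so the finite inverses have no entrywise limit. You should therefore state and prove the lemma in form (a) or (b), rather than assert that the infinite product $ZD^{-1}Z^\top$ is well defined.
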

\begin{proof}
By Proposition~\ref{prop:Cholesky_decomposition_of_the_Gram_matrix}, we have
\[
A^{-1} = (L D L^\top)^{-1} = (L^\top)^{-1} D^{-1} L^{-1} = Z D^{-1} Z^\top.
\]
\end{proof}
The identity above is identitical to quadratic forms on 
finitely supported sequences. 
They should not be interpreted as bounded operator identity on $\ell^2$. 
In fact, $D^{-1}$ grows exponentially, which is not bounded.

\section{Finite Gram matrix}
\label{sec:finite-gram-matrix}

Let $N\in\mathbb{N}$ be fixed. 
We consider the first $N+1$ scaled AW Hermite functions 
\[ \psi_0,\psi_1,\ldots,\psi_N, \] 
and denote by $A_{11}^N$ the corresponding finite Gram matrix. 
To relate this finite matrix to the infinite Gram matrix, 
we decompose the infinite Gram matrix $A$ according to the truncation index $N$ as:
$$
\renewcommand{\arraystretch}{1.5}
A=
\left(
\begin{array}{cc}
A_{11}^N & A_{12}^N \\
A_{21}^N & A_{22}^N \\
\end{array}
\right)
$$
where the blocks are
$$
\renewcommand{\arraystretch}{1.5}
\left\{
\begin{array}{clcl}
A_{11}^N &\in \mathbb R^{(N+1)     \times (N+1)    }, &
A_{12}^N &\in \mathbb R^{(N+1)     \times \mathbb N}, \\
A_{21}^N &\in \mathbb R^{\mathbb N \times (N+1)    }, &
A_{22}^N &\in \mathbb R^{\mathbb N \times \mathbb N}.
\end{array}
\right.
$$
Unless stated otherwise, all block matrices $A_{11}, A_{12}, A_{21}, A_{22}$ 
are understood to correspond to the truncation index $N$, 
and the superscript ${}^N$ is omitted whenever there is no ambiguity. 
We decompose the infinite matrices $Z$, $L$, and $D$ in the same manner.
\begin{lemma}
Let $N\in\mathbb{N}$ be fixed. 
Then
\begin{equation} \label{eq:aNN}
  a_{NN}
= 2^{-2N - \frac{1}2}
  \frac{ (2N)!} { \left( N ! \right)^2 }.
\end{equation}
For large $N$, we have 
$
a_{NN}\approx (\pi 2 N) ^{-\frac12} $. 
\end{lemma}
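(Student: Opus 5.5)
The exact formula~\eqref{eq:aNN} is the case $m=n=N$ of Proposition~\ref{prop:4.1}. With $m=n=N$ we have $m+n=2N$, which is even, and the sign $(-1)^{(m-n)/2}=1$. Substituting into~\eqref{eq:b46} gives
\[
a_{NN}=2^{-2N-\frac12}\frac{(2N)!}{N!\,\sqrt{N!\,N!}}=2^{-2N-\frac12}\frac{(2N)!}{(N!)^2}.
\]
As a consistency check, I would also verify that this agrees with the recurrence~\eqref{eq:iter1} of Lemma~\ref{lemma:nm}. The formula gives $a_{00}=2^{-1/2}$. For the ratio,
\[
\frac{a_{N+1,N+1}}{a_{NN}}=\frac14\cdot\frac{(2N+2)(2N+1)}{(N+1)^2}=\frac{2N+1}{2N+2},
\]
as required.

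For the asymptotic statement, I would apply Stirling's formula $k!\sim\sqrt{2\pi k}\,(k/e)^k$ to the central binomial coefficient:
\[
\binom{2N}{N}=\frac{(2N)!}{(N!)^2}\sim\frac{\sqrt{4\pi N}\,(2N/e)^{2N}}{2\pi N\,(N/e)^{2N}}=\frac{4^N}{\sqrt{\pi N}}.
\]
Multiplying by $2^{-2N-1/2}$ gives
\[
a_{NN}\sim\frac{1}{\sqrt2\,\sqrt{\pi N}}=(2\pi N)^{-\frac12},
\]
where $\approx$ in the statement means that the ratio of the two sides tends to $1$. Alternatively, I could write $a_{NN}=2^{-1/2}\prod_{k=1}^{N}\frac{2k-1}{2k}$ and invoke the Wallis product, which avoids Stirling's formula.

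There is no real obstacle. The only point needing care is bookkeeping the powers of $2$ and the $\sqrt{2\pi k}$ prefactors in Stirling's formula. If a quantitative version is wanted, the standard bounds $\frac{4^N}{\sqrt{\pi(N+1/2)}}\le\binom{2N}{N}\le\frac{4^N}{\sqrt{\pi N}}$ give two-sided estimates directly.
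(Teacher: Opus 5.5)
Your proposal is correct and follows the paper's route: the exact formula is the case $m=n=N$ of Proposition~\ref{prop:4.1}, which the paper leaves implicit, and the asymptotic $a_{NN}\sim(2\pi N)^{-1/2}$ comes from applying Stirling's formula to $(2N)!/(N!)^2$, as the paper does. Your check against the recurrence~\eqref{eq:iter1} and your Wallis-product alternative are both correct, but neither is needed.
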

\begin{proof}
The Stirling formula written as  $N!\approx \sqrt{2\pi N} (N/e)^N$ yields that
$$
a_{NN} 
\approx 
2^{-2N - \frac{1}2} 
\frac{\sqrt{2\pi 2 N} (2N/e)^{2N}}{\left( \sqrt{2\pi N} (N/e)^N \right)^2}
\approx (\pi 2 N) ^{-\frac12}  .
$$
\end{proof}
This formula is consistent with the fact that the amplitude of 
the scaled AW Hermite functions decreases like $O(N^{-\frac14})$ 
in the main "support" of Hermite functions \cite{szego}.
It also indicates that the ratio of 
large numbers in (\ref{eq:aNN}) is asymptotically small.

\begin{lemma}\label{lem:relation_interesting}
Block matrices of $A$ and $Z$ satisfy
\[
  A_{11} Z_{12}
+ A_{12} Z_{22}
= 0.
\]
\end{lemma}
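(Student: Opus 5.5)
The identity $A_{11}Z_{12}+A_{12}Z_{22}=0$ is the upper-right block of the product $AZ$. My plan is therefore to compute $AZ$ using the factorization $A=LDL^\top$ from Proposition~\ref{prop:Cholesky_decomposition_of_the_Gram_matrix} together with $L^\top Z=I$ from the preceding lemma. Formally,
\[
AZ = LDL^\top Z = LD,
\]
and $LD$ is lower triangular because $L$ is unit lower triangular and $D$ is diagonal. Partitioning $LD$ according to the truncation index $N$, its upper-right block is $L_{11}D_{12}+L_{12}D_{22}$. Since $L_{12}=0$ and $D_{12}=0$, this block vanishes. The upper-right block of $AZ$ is exactly $A_{11}Z_{12}+A_{12}Z_{22}$, which gives the claim.

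The associativity step $(LDL^\top)Z=LD(L^\top Z)$ needs justification, since the matrices are infinite. I will do this entrywise. Fix a row index $i\le N$ and a column index $j>N$. The $(i,j)$ entry of $AZ$ is $\sum_k a_{ik}z_{kj}$. Because $Z$ is upper triangular, only $k\le j$ contributes, so the sum is finite. Moreover, in $a_{ik}=\sum_p l_{ip}d_p l_{kp}$ only $p\le\min(i,k)$ contributes. Hence every sum involved is finite, the order of summation can be exchanged freely, and
\[
\sum_k a_{ik}z_{kj}=\sum_{p\le i} l_{ip}d_p\sum_{k} l_{kp}z_{kj}
=\sum_{p\le i} l_{ip}d_p\,(L^\top Z)_{pj}=l_{ij}d_j,
\]
and $l_{ij}d_j=0$ since $i\le N<j$.

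The main point to verify is that the column sums of $AZ$ really are finite, which is guaranteed by the triangularity of $Z$; without it, convergence issues would arise because $D^{-1}$ is unbounded. A secondary point is that the preceding lemma is stated as $ZL^\top=L^\top Z=I$ but proved through $Z^\top L=I$, whose transpose is $L^\top Z=I$. That transposed form is exactly the one used here, so the argument goes through as written.
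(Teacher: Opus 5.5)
Your proof is correct and follows the same route as the paper: $AZ = LDL^\top Z = LD$ is lower triangular, so its upper-right block $A_{11}Z_{12}+A_{12}Z_{22}$ vanishes. Your entrywise check that every sum is finite, because $Z$ and $L$ are triangular, justifies regrouping the infinite products; the paper leaves this step implicit.
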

\begin{proof}
We have the Cholesky decomposition of $A$ 
\[
A = L D L^\top
\]
as given by Proposition~\ref{prop:Cholesky_decomposition_of_the_Gram_matrix}.
Multiplying $A$ by $Z$
we have
\[
A Z = L D L^{\top} Z = L D.
\]
Since $L D$ is a lower triangular matrix, which completes the proof. 
\end{proof}
\begin{Remark} 
The above identity motivates the terminology Gram kernel matrix. 
Indeed, every column of 
\[
\renewcommand{\arraystretch}{1.5}
\begin{bmatrix} 
Z_{12} \\ 
Z_{22} 
\end{bmatrix} 
\] 
belongs to the kernel of the block Gram matrix 
\[
\renewcommand{\arraystretch}{1.5}
\begin{bmatrix} 
A_{11} & A_{12} 
\end{bmatrix}. 
\] 
In other words, 
\[ 
\renewcommand{\arraystretch}{1.5}
\operatorname{Im} 
\begin{bmatrix} 
Z_{12} \\ 
Z_{22} 
\end{bmatrix} 
\subseteq 
\ker 
\begin{bmatrix} 
A_{11} & A_{12} 
\end{bmatrix}. 
\] 
\end{Remark}

\begin{lemma}\label{lemma:equation_some_2}
For a fixed integer $N$, 
block matrices of $A$, $Z$ and $L$ satisfy 
\[
  A_{11}^{-1}A_{12}
= Z_{11}L^\top_{21}
=-Z_{12}L^\top_{22}.
\]
\end{lemma}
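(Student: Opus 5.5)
We work entirely with the block decompositions of $A=LDL^\top$ and of $Z=(L^\top)^{-1}$ at truncation index $N$. Since $L$ is lower triangular, $L_{12}=0$; since $Z$ is upper triangular, $Z_{21}=0$. Hence
\[
A_{11}=L_{11}D_{11}L_{11}^\top,\qquad A_{12}=L_{11}D_{11}L_{21}^\top .
\]
Every entry here is a finite sum because $L_{11}$ and $D_{11}$ are finite, so these products are legitimate.

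For the first equality we need $A_{11}^{-1}$ and $Z_{11}$. The matrix $A_{11}$ is invertible, being a principal block of a positive definite Gram matrix; equivalently, $L_{11}$ is unit lower triangular and $D_{11}$ is diagonal with positive entries. This gives
\[
A_{11}^{-1}=(L_{11}^\top)^{-1}D_{11}^{-1}L_{11}^{-1}.
\]
Next, take the $(1,1)$ block of the lemma identity $L^\top Z=I$. Because $L^\top$ is upper triangular, its lower-left block vanishes, so this block reads $L_{11}^\top Z_{11}=I$. The $(1,1)$ block of $ZL^\top=I$ likewise gives $Z_{11}L_{11}^\top=I$, since $Z_{21}=0$. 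Thus $Z_{11}=(L_{11}^\top)^{-1}$. Combining,
\[
A_{11}^{-1}A_{12}=(L_{11}^\top)^{-1}D_{11}^{-1}L_{11}^{-1}L_{11}D_{11}L_{21}^\top=Z_{11}L_{21}^\top .
\]

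For the second equality, take the $(1,2)$ block of $ZL^\top=I$. Using that the $(1,2)$ block of $L^\top$ is $L_{21}^\top$ and its $(2,2)$ block is $L_{22}^\top$, this block reads
\[
Z_{11}L_{21}^\top+Z_{12}L_{22}^\top=0,
\]
which is exactly $Z_{11}L_{21}^\top=-Z_{12}L_{22}^\top$.

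An alternative route is Lemma~\ref{lem:relation_interesting}, which gives $A_{11}^{-1}A_{12}=-Z_{12}Z_{22}^{-1}$. Identifying $Z_{22}^{-1}=L_{22}^\top$ from the $(2,2)$ block of $ZL^\top=I$ would give the same conclusion.

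The only delicate point is justifying the block products of infinite matrices. In $Z_{12}L_{22}^\top$, entry $(i,j)$ is $\sum_k z_{i,N+1+k}\,l_{N+1+j,N+1+k}$. Because $L$ is lower triangular, only $k\le j$ contributes, so the sum is finite. The same holds for each product used above, since every product pairs a matrix with a triangular factor of bounded bandwidth in the summed index. Hence associativity holds entrywise. I expect this bookkeeping, rather than any computation, to be the main thing to state carefully.
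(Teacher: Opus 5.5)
Your proof is correct. For the second equality it coincides with the paper's: the $(1,2)$ block of $ZL^\top=I$, using that both factors are upper triangular. For the first equality you take a different route. The paper does not use the finite Cholesky blocks. It invokes Lemma~\ref{lem:relation_interesting} ($AZ=LD$ is lower triangular, so $A_{11}Z_{12}+A_{12}Z_{22}=0$), right-multiplies by $L_{22}^\top$, and uses $Z_{22}L_{22}^\top=I$ to get $A_{11}^{-1}A_{12}=-Z_{12}L_{22}^\top$. That is essentially the ``alternative route'' you sketch at the end.

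Your main argument instead reads off $A_{11}=L_{11}D_{11}L_{11}^\top$ and $A_{12}=L_{11}D_{11}L_{21}^\top$. From these you obtain the standard block-Cholesky formula $A_{11}^{-1}A_{12}=L_{11}^{-\top}L_{21}^\top=Z_{11}L_{21}^\top$. The advantage of your route is that this step involves only finite sums over indices $\le N$. You never need the infinite triple product $LDL^\top Z$, nor to right-multiply the dense infinite block $A_{12}$ by infinite triangular matrices. The only infinite-matrix input is the block structure of $ZL^\top=I$. The paper's route, in exchange, exhibits the kernel relation $A_{11}Z_{12}+A_{12}Z_{22}=0$, which motivates the name ``Gram kernel matrix''.

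There are two small slips, neither affecting validity.
\begin{itemize}
\item In the $(1,1)$ blocks you swapped the reasons. In $(L^\top Z)_{11}=L_{11}^\top Z_{11}+L_{21}^\top Z_{21}$ the extra term vanishes because $Z_{21}=0$. In $(ZL^\top)_{11}=Z_{11}L_{11}^\top+Z_{12}(L^\top)_{21}$ it vanishes because $(L^\top)_{21}=L_{12}^\top=0$. Either identity alone suffices, since $L_{11}^\top$ is a finite square matrix.
\item ``Bounded bandwidth'' is inaccurate, since $L$ and $Z$ have nonzero entries arbitrarily far from the diagonal. What makes every sum finite is triangularity, exactly as your explicit check of $Z_{12}L_{22}^\top$ shows.
\end{itemize}
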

\begin{proof}
Since the Gram kernel matrix $Z$ and 
the matrix $L^\top$ are upper triangle matrices,
we have
\[
Z_{12}L^\top_{22} = -Z_{11}L^\top_{21}.
\]
By Lemma \ref{lem:relation_interesting}, 
we have
\[
  A_{11} Z_{12}
+ A_{12} Z_{22}
= 0.
\]
Take the following product
\[
  \left(
  A_{11} Z_{12}
+ A_{12} Z_{22} \right) L^\top_{22}
= 0,
\]
which leads to
\[
A_{11}^{-1}A_{12} = -Z_{12}L^\top_{22}.
\]
\end{proof}

\begin{proposition}\label{prop:maximum_absolute_entry}
Consider the block matrix by truncation $N$ of the Gram matrix $A$.
The values of the entries of $A_{21} A_{11}^{-1} A_{12}$ are estimated as follows:
\[
  \left| \left(A_{21} A_{11}^{-1} A_{12}\right)_{nm} \right|
\leq \left| a_{nm} \right|.
\]
The upper bound for the entries is 
\[
  \max_{n,m}\left| \left(A_{21} A_{11}^{-1} A_{12}\right)_{nm} \right|
\leq \left| a_{N+1,N+1} \right|.
\]
\end{proposition}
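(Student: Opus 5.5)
The plan is to read off $A_{21}A_{11}^{-1}A_{12}$ directly from the Cholesky factorization of Proposition~\ref{prop:Cholesky_decomposition_of_the_Gram_matrix}, and then compare it termwise with the full entries $a_{nm}$. Throughout, the indices $n,m$ of the block $A_{21}A_{11}^{-1}A_{12}$ are understood as global indices $n,m\ge N+1$.

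\emph{Step 1: block form.} Partition $L$ and $D$ conformally with the truncation index $N$. Since $L$ is lower triangular, $L_{12}=0$ and $D$ is block diagonal. The factorization $A=LDL^\top$ therefore gives
\[
A_{11}=L_{11}D_{11}L_{11}^\top,\qquad A_{12}=L_{11}D_{11}L_{21}^\top,\qquad A_{21}=L_{21}D_{11}L_{11}^\top .
\]
The block $L_{11}$ is a finite unit lower triangular matrix, hence invertible, so
\[
A_{21}A_{11}^{-1}A_{12}=L_{21}D_{11}L_{21}^\top .
\]
Entrywise, for $n,m\ge N+1$, this reads
\[
\left(A_{21}A_{11}^{-1}A_{12}\right)_{nm}=\sum_{k=0}^{N} l_{nk}\,d_k\,l_{mk}.
\]
Only finitely many terms occur, so there is no convergence issue in these block manipulations. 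For comparison, the full factorization gives
\[
a_{nm}=\sum_{k=0}^{\min(n,m)} l_{nk}\,d_k\,l_{mk}.
\]

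\emph{Step 2: sign structure.} By~\eqref{eq:entries_cholesky_gram_L}, the term $l_{nk}d_kl_{mk}$ is nonzero only if $k\equiv n\equiv m \pmod 2$. When it is nonzero, its sign is
\[
(-1)^{\frac{n-k}{2}+\frac{m-k}{2}}=(-1)^{\frac{n+m}{2}-k}.
\]
Since $k$ runs only over a single parity class, this sign is the same for every contributing $k$. Hence all nonzero summands of $a_{nm}$ have a common sign. The sum defining $(A_{21}A_{11}^{-1}A_{12})_{nm}$ is a sub-sum over $k\le N<\min(n,m)$, so its absolute value is at most $|a_{nm}|$. If $n+m$ is odd, both sides vanish. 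This proves the first inequality. The point to be careful about is exactly this common sign: without it the partial sum could exceed the total.

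\emph{Step 3: uniform bound.} Because $A$ is a Gram matrix, Cauchy--Schwarz in $L^2(\mathbb{R})$ gives
\[
|a_{nm}|\le\sqrt{a_{nn}a_{mm}}.
\]
By the recurrence~\eqref{eq:iter1}, the diagonal entries $a_{mm}$ are positive and strictly decreasing in $m$, since the factor $\frac{2m+1}{2m+2}$ is less than $1$. For $n,m\ge N+1$ we therefore get $|a_{nm}|\le a_{N+1,N+1}$. Combining this with Step 2 yields the stated bound on the maximum entry.
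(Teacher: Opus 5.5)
Your proof is correct and follows the paper's argument. The key identity $A_{21}A_{11}^{-1}A_{12}=L_{21}D_{11}L_{21}^\top$ (the paper's $Y_{21}L_{21}^\top$) is the $k\le N$ part of the Cholesky expansion of $a_{nm}$, and all of its terms share one sign, which is exactly what the paper encodes as $C_1,C_2\ge 0$ with $C_1+C_2=1$; your small departures are, if anything, slightly cleaner. You derive that identity directly from the block factorization rather than through $Z$ and Lemma~\ref{lemma:equation_some_2}, you use the sign observation instead of the hypergeometric normalization, and you use the Cauchy--Schwarz bound $|a_{nm}|\le\sqrt{a_{nn}a_{mm}}$ in place of the paper's unproved claim that the diagonal entries dominate their rows.
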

\begin{proof}
Consider the Cholesky decomposition of the Gram matrix $A$, 
and let $Y$ denote the product of $L$ and $D$, 
we have
\[
\renewcommand{\arraystretch}{1.5}
Y L^\top = 
\begin{bmatrix}
Y_{11} L^\top_{11} & Y_{11}L^\top_{21} \\
Y_{21} L^\top_{11} & Y_{21}L^\top_{21} +Y_{22}L^\top_{22} \\
\end{bmatrix}
=
\begin{bmatrix}
A_{11} & A_{12} \\
A_{21} & A_{22} \\
\end{bmatrix}
.
\]
Then, we have
\[
\begin{aligned}
  \left( Y_{21}L^\top_{21} \right)_{nm}
=& \, a_{nm}
  \frac{(\frac{n+m}{2})! n!m!}{(n+m)!} 
  \sum_{k=0}^{N} \frac{2^k}{k!(\frac{n-k}{2})!(\frac{m-k}{2})!} \\
=& \, a_{nm} C_1(N,n,m), \\
\end{aligned}
\]
and
\[
\begin{aligned}
  \left( Y_{22}L^\top_{22} \right)_{nm}
=& \, a_{nm}
  \frac{(\frac{n+m}{2})! n!m!}{(n+m)!} 
  \sum_{k=N+1}^{\min(n,m)} \frac{2^k}{k!(\frac{n-k}{2})!(\frac{m-k}{2})!} \\ 
=& \, a_{nm} C_2(N,n,m). \\
\end{aligned}
\]
Clearly, $C_1, C_2$ are two non-negative numbers and $C_1 + C_2 = 1$
given by Lemma~\ref{eq:identity_hypergeometric_series}.
Hence, we get
\[
\left|\left( Y_{21}L^\top_{21} \right)_{nm}\right|
\leq |a_{nm}|.
\]
By Lemma \ref{lemma:equation_some_2}, we have
\[
\begin{aligned}
   \left|\left( Y_{21}L^\top_{21} \right)_{nm}\right|
&= \left|\left( A_{21}Z_{11}L^\top_{21} \right)_{nm}\right| \\
&= \left|\left( A_{21}A_{11}^{-1}A_{12} \right)_{nm}\right| \\
&\leq \left| a_{n,m} \right|.
\end{aligned}
\]
Moreover, the Gram matrix $A$ has diagonal entries 
that are the maximum (in absolute value) entries of their respective rows, and
the absolute value of the entries at the diagonal is nonincreasing as the row index increases.
Consequently, $\forall n, m \geq N+1$, 
\[
\left|\left( A_{21}A_{11}^{-1}A_{12} \right)_{nm}\right| 
\leq |a_{N+1,N+1}|,
\]
which completes the proof.
\end{proof}

\section{Decay properties of the finite Gram matrix}
\label{sec:decay-properties-of-the-finite-gram-matrix}

We denote the smallest eigenvalue of $A_{11}$ by $\lambda_N$.
To estimate the smallest eigenvalue $\lambda_N$ of the Gram matrix $A_{11}$, 
we can use the Rayleigh quotient
\[
  \lambda_N 
= \min\left\{ 
  \dfrac{ \sum_{n,m=0}^N a_{nm} x_n \overline{x}_m }{ \sum_{n=0}^N |x_n|^2 } 
  \right\},
\]
with coefficients $x_n$, $n=0,\dots,N$.
The Gram matrix $A_{11}$ is symmetric positive definite.
Since $A_{11}$ is symmetric positive definite, 
its inverse $A_{11}^{-1}$ is also symmetric positive definite. 
Consequently, 
all eigenvalues of $A_{11}^{-1}$ are positive, 
and its largest eigenvalue satisfies
\begin{equation}\label{eq:largest_eigenvalue}
  \dfrac{1}{\lambda_N} 
= \max\left\{ \sum_{n,m=0}^N (A_{11}^{-1})_{nm} x_n \overline{x}_m : 
  \sum_{n=0}^N |x_n|^2 = 1 \right\},
\end{equation}
where \(x=(x_n)_{n=0}^N \in\mathbb{C}^{N+1}\) is a unit vector.
Applying the Cauchy-Schwarz inequality to the entries of \(A_{11}^{-1}\),
\[
(A_{11}^{-1})_{nm} \leq \sqrt{ (A_{11}^{-1})_{nn} (A_{11}^{-1})_{mm} }
\]
for all \(0\le n,m\le N\),
and then using the Cauchy--Schwarz inequality we obtain an upper bound of
\[
\begin{aligned}
      \sum_{n,m=0}^N (A_{11}^{-1})_{nm} x_n \overline{x}_m
&\leq \sum_{n,m=0}^N \sqrt{ (A_{11}^{-1})_{nn} (A_{11}^{-1})_{mm} } 
      x_n \overline{x}_m \\
&\leq \left( \sum_{n=0}^N (A_{11}^{-1})_{nn} \right) 
      \left( \sum_{n=0}^N |x_n|^2 \right)  \\
&=    \left( \sum_{n=0}^N (A_{11}^{-1})_{nn} \right). \\
\end{aligned}
\]
Therefore, an upper bound for the largest eigenvalue of $A_{11}^{-1}$ 
immediately yields a lower bound for the smallest eigenvalue of $A_{11}$, that is
\begin{equation}\label{eq:lower_bound_smallest_eigenvalue}
\dfrac{1}{ \sum_{n=0}^N (A_{11}^{-1})_{nn} } \leq \lambda_N.
\end{equation}
In this section, following the approach of \cite{szego1936some,chen1999small}, 
we show that, by an appropriate choice of the vector \({x_n}\), 
the lower bound in~\eqref{eq:lower_bound_smallest_eigenvalue} is, in fact, an asymptotic estimate for large \(N\).

\subsection{Smallest eigenvalue}

\begin{lemma}[Asymptotics of the smallest eigenvalue]
\label{lem:asymptotics-of-the-smallest-eigenvalue}
Let \(\lambda_N\) denote the smallest eigenvalue of \(A_{11}\). 
Then, as \(N\to\infty\),
\[
     \frac{1}{\lambda_N}
\sim \sum_{n=0}^{N} (A_{11}^{-1})_{nn}
\sim \frac32 \sqrt{\frac{3}{\pi}} N^{-\frac12} 3^N.
\]
Consequently,
\[
  \lambda_N
\sim
  \frac{2\sqrt{3\pi}}{9} \,N^{\frac12}3^{-N}.
\]
In particular,
the smallest eigenvalue decays with exponential rate \(3^{-N}\).
\end{lemma}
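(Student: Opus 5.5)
The plan has three parts: compute $A_{11}^{-1}$ exactly, get the asymptotics of its trace, and then show the upper bound~\eqref{eq:largest_eigenvalue}--\eqref{eq:lower_bound_smallest_eigenvalue} is asymptotically sharp with a test vector.

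\textbf{Step 1: exact finite inverse.} Because $L$ is lower triangular, the leading $(N+1)\times(N+1)$ block of $A=LDL^\top$ is $A_{11}=L_{11}D_{11}L_{11}^\top$. Inverting blockwise, as in Lemma~\ref{lem:decomposition of the inverse Gram matrix}, gives
\[
A_{11}^{-1}=Z_{11}D_{11}^{-1}Z_{11}^\top .
\]
All entries of $Z$ are nonnegative, so all entries of $A_{11}^{-1}$ are nonnegative. Moreover
\[
\sum_{n=0}^N (A_{11}^{-1})_{nn}=\sum_{k=0}^N 2^{k+\frac12}S_k,
\qquad
S_k:=\sum_{n\le k} z_{nk}^2=\sum_{j=0}^{\lfloor k/2\rfloor}\binom{k}{2j}\binom{2j}{j}4^{-2j}.
\]

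\textbf{Step 2: asymptotics of the trace.} I would use $\binom{2j}{j}4^{-j}=\frac1\pi\int_0^\pi\cos^{2j}\theta\,d\theta$. This turns $S_k$ into
\[
S_k=\frac1{2\pi}\int_0^\pi\Big[\big(1+\tfrac{\cos\theta}{2}\big)^k+\big(1-\tfrac{\cos\theta}{2}\big)^k\Big]d\theta .
\]
Laplace's method applies at $\theta=0$ and $\theta=\pi$, where the integrand is about $(3/2)^k e^{-k\theta^2/6}$. This gives $S_k\sim(3/2)^k\sqrt{3/(2\pi k)}$, hence $2^{k+\frac12}S_k\sim\sqrt{3/(\pi k)}\,3^k$. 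The sum over $k\le N$ is geometric with ratio $3$, so it is dominated by the last terms and equals $\frac{3}{2}\sqrt{3/\pi}\,N^{-1/2}3^N(1+o(1))$. That is the claimed trace asymptotic; the formula for $\lambda_N$ then follows by taking reciprocals.

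\textbf{Step 3: matching lower bound for $1/\lambda_N$.} The bound~\eqref{eq:lower_bound_smallest_eigenvalue} already gives $1/\lambda_N\le\operatorname{tr}A_{11}^{-1}$. For the reverse inequality I would insert a test vector into~\eqref{eq:largest_eigenvalue}, using the sum-of-rank-one form
\[
x^\top A_{11}^{-1}x=\sum_k 2^{k+\frac12}\big((Z_{11}^\top x)_k\big)^2 .
\]
Only the top $O(1)$ indices $k=N,N-1,\dots$ matter, carrying weights $1,\frac13,\frac19,\dots$. The natural choice is $x_n\propto\sqrt{(A_{11}^{-1})_{nn}}$, or equivalently a normalized combination of the top columns of $Z_{11}$. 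The goal is then to show that the Cauchy--Schwarz steps preceding~\eqref{eq:lower_bound_smallest_eigenvalue} are asymptotically equalities. This needs the normalized columns $z_{\cdot k}/\|z_{\cdot k}\|$ for $k$ near $N$ to be asymptotically parallel, which I would check from the Gaussian profile found in Step 2: their mass sits around $n\approx k/3$ with width $O(\sqrt k)$.

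\textbf{Main obstacle.} The hard part is this sharpness step, and a parity issue makes it delicate. $A_{11}^{-1}$ splits into even and odd blocks, and columns of opposite parity are orthogonal, not parallel. A single test vector therefore only captures the contribution of one parity class, roughly $\sum_{k\equiv N}2^{k+\frac12}S_k\approx\frac98$ of the leading term instead of $\frac32$. I would first verify the constant numerically. If the gap is confirmed, the correct statement would have a smaller constant, and the argument would proceed one parity block at a time.
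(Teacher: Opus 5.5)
Your Steps 1 and 2 are correct and prove the second equivalence, $\operatorname{tr}A_{11}^{-1}\sim\frac32\sqrt{3/\pi}\,N^{-1/2}3^N$. The paper gets the same asymptotics for $S_k$ from Stirling's formula and Laplace's method applied to $b_m(x)$, treating even and odd columns separately. Your identity $\binom{2j}{j}4^{-j}=\frac1\pi\int_0^\pi\cos^{2j}\theta\,d\theta$ reduces $S_k$ to a single Laplace integral with two symmetric peaks, which is the cleaner route.

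Your parity objection is correct, and it is decisive rather than delicate, so you need no numerical check. Because $a_{nm}=0$ for $n+m$ odd, $A_{11}^{-1}$ is, up to a permutation of indices, the direct sum of an even block $B_{\mathrm e}$ and an odd block $B_{\mathrm o}$. Hence
\[
\frac1{\lambda_N}=\max\bigl(\lambda_{\max}(B_{\mathrm e}),\lambda_{\max}(B_{\mathrm o})\bigr)\le\max\bigl(\operatorname{tr}B_{\mathrm e},\operatorname{tr}B_{\mathrm o}\bigr).
\]
Set $c_k:=2^{k+\frac12}S_k\sim\sqrt{3/\pi}\,k^{-1/2}3^k$ and $C_N:=\sqrt{3/\pi}\,N^{-1/2}3^N$. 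The block containing the index $N$ has trace $\sum_{j\ge0}c_{N-2j}\sim\frac98C_N$, and the other block has trace $\sim\frac38C_N$. Therefore $\limsup_{N\to\infty}\lambda_N^{-1}/\operatorname{tr}A_{11}^{-1}\le\frac34$. The first equivalence in the statement is false, and with it the constant $\frac{2\sqrt{3\pi}}{9}$.

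The paper's proof has exactly this gap, plus a second one:
\begin{itemize}
\item It applies $(A_{11}^{-1})_{nm}\sim\sqrt{(A_{11}^{-1})_{nn}(A_{11}^{-1})_{mm}}$ to all pairs in the window $N-\alpha N^{1/4}\le n,m\le N$. But its technical lemma assumes $N-n$ and $N-m$ even, and the entries with $n-m$ odd vanish.
\item Its test vector is supported in that window, where $(A_{11}^{-1})_{nn}=\sum_{k=n}^N2^{k+\frac12}z_{nk}^2\le 2^{N}e^{O(N^{1/4}\log N)}$. The resulting lower bound is exponentially smaller than $3^N$, and the closing claim that it is ``well approximated by'' the full trace is unsupported.
\end{itemize}

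So the defect lies in the statement, not in your reasoning. What remains is to make your Step 3 rigorous on the parity-$N$ block, and your choice $x=u_N$ is the right one. Write $A_{11}^{-1}=\sum_{k\le N}c_k\,u_ku_k^\top$ with $u_k:=z_{\cdot k}/\|z_{\cdot k}\|$. Positivity immediately gives $1/\lambda_N\ge c_N\sim C_N$, which together with the trace bound already yields $\lambda_N\asymp N^{1/2}3^{-N}$. For the sharp constant, use
\[
z_{nk}=\frac{\sqrt{k(k-1)}}{2(k-n)}\,z_{n,k-2},\qquad n\le k-2,\quad n\equiv k\ (\mathrm{mod}\ 2).
\]
The weights $z_{n,k-2}^2$ concentrate where $k-n=\frac k3+O(\sqrt k)$. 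There the ratio equals $\frac32\bigl(1+O(k^{-1/2})\bigr)$, so $\langle u_k,u_{k-2}\rangle\to1$. By subadditivity of angles, $\langle u_N,u_{N-2j}\rangle\to1$ for every fixed $j$. Then for each fixed $J$,
\[
u_N^\top A_{11}^{-1}u_N\ge\sum_{j\le J}c_{N-2j}\langle u_N,u_{N-2j}\rangle^2\sim C_N\sum_{j\le J}9^{-j},
\]
and letting $J\to\infty$ gives
\[
\frac1{\lambda_N}\sim\frac98\sqrt{\frac3\pi}\,N^{-\frac12}3^N,\qquad \lambda_N\sim\frac{8\sqrt{3\pi}}{27}\,N^{\frac12}3^{-N}.
\]
The $3^{-N}$ rate in the statement therefore survives; only the constant changes. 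One slip to fix: the mass of $z_{\cdot k}$ sits at $n\approx\frac{2k}{3}$, not at $n\approx\frac k3$. It is $k-n$ that is close to $\frac k3$, and that is what the ratio computation above uses.
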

\begin{proof}
From Lemma~\ref{lem:technical_lemma_2}, 
when $n, m$ are sufficiently large and satisfy~\eqref{eq:bulk_region}, we see that  
\begin{equation}\label{eq:decay_property_1}
|(A_{11}^{-1})_{nm}|
\sim
\sqrt{|(A_{11}^{-1})_{nn}| \, |(A_{11}^{-1})_{mm}|}.
\end{equation}
By choosing the vector $\{x_n\}$, as in \cite{szego1936some,chen1999small}, 
such that
\[
x_n = \left\{
\begin{aligned}
& \sigma e^{\imath\pi n} \sqrt{|(A_{11}^{-1})_{nn}|}, 
  \quad N - \alpha N^{1/4} \le n \le N, \\
& 0, \quad n < N - \alpha N^{1/4}, \\
\end{aligned}
\right.
\]
where $\sigma$ is a positive number determined by the condition 
\begin{equation}\label{eq:decay_property_2}
  \sum_{n=0}^N |x_n|^2 
= \sigma^2 \sum_{n = N - \alpha N^{1/4}}^N
  (A_{11}^{-1})_{nn} 
= 1.
\end{equation}
We find, using~\eqref{eq:decay_property_1} and~\eqref{eq:decay_property_2}, that
\[
\begin{aligned}
  \sum_{n,m=0}^N (A_{11}^{-1})_{nm} x_n \bar{x}_m 
&=\sum_{n,m = N - \alpha N^{1/4}}^N 
  \sigma^2 e^{\imath\pi (n-m)} (A_{11}^{-1})_{nm} 
  \sqrt{|(A_{11}^{-1})_{nn}| \, |(A_{11}^{-1})_{mm}|} \\
&\sim\sigma^2 \left( 
  \sum_{n = N - \alpha N^{1/4}}^N 
  (A_{11}^{-1})_{nn} 
  \right)^2 \\
&=\sum_{n = N - \alpha N^{1/4}}^N 
  (A_{11}^{-1})_{nn} 
\end{aligned}
\]
Recalling~\eqref{eq:largest_eigenvalue}, 
we see that the asymptotic behaviour of the maximum, 
by virtue of the inequality~\eqref{eq:lower_bound_smallest_eigenvalue}, 
is well approximated by
$\sum_{n=0}^N (A_{11}^{-1})_{nn}$. 
Therefore we have shown that
\[
\dfrac{1}{\lambda_N} \sim \sum_{n=0}^N (A_{11}^{-1})_{nn}.
\]
From Lemma~\ref{lem:technical_lemma_1}, we see that
\[
\dfrac{1}{\lambda_N} 
\sim \frac32 \sqrt{\frac{3}{\pi}} N^{-\frac12} 3^N.
\]
Inverting this asymptotic relation 
yields the claimed asymptotic formula for $\lambda_N$, 
and the proof is complete.
\end{proof}

\subsection{Schur complement}
\begin{lemma}\label{lem:schur-complement-A11}
Let $N \ge 1$.
Suppose that the block $A_{11}^N$ of the Gram matrix $A$ is partitioned as
\[
\renewcommand{\arraystretch}{1.5}
A_{11}^N =
\begin{pmatrix}
A_{11}^{N-1} & X   \\
X^\top       & a_{NN}
\end{pmatrix},
\]
where
\[
A_{11}^{N-1} \in \mathbb{R}^{N\times N}, \qquad
X            \in \mathbb{R}^{N\times 1}, \qquad
a_{NN}       \in \mathbb{R}.
\]
Since $A_{11}^{N-1}$ is invertible, 
the Schur complement of $A_{11}^{N-1}$ in $A_{11}^N$ is given by
\[
   A_{11}^N / A_{11}^{N-1}
:= a_{NN} - X^\top (A_{11}^{N-1})^{-1} X.
\]
Moreover, this Schur complement is given explicitly by
\[
  A_{11}^N / A_{11}^{N-1}
= 2^{-N-\frac12}.
\]
\end{lemma}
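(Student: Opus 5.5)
The plan is to read the Schur complement directly off the Cholesky factorization of Proposition~\ref{prop:Cholesky_decomposition_of_the_Gram_matrix}. Since $L$ is unit lower triangular and $D$ is diagonal, the leading $(N+1)\times(N+1)$ block of $A=LDL^\top$ satisfies
\[
A_{11}^N = L_{11}^N D_{11}^N (L_{11}^N)^\top .
\]
The reason is that for indices $n,m\le N$ the sum $\sum_k l_{nk} d_k l_{mk}$ only involves $k\le\min(n,m)\le N$. No convergence issue arises, because each entry of $A$ is a finite sum. The truncated factors $L_{11}^N$ and $D_{11}^N$ are exactly the (unique) $LDL^\top$ factors of the finite matrix $A_{11}^N$.

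Next I partition $L_{11}^N$ conformally with the statement:
\[
L_{11}^N=\begin{pmatrix} L_{11}^{N-1} & 0\\ \ell^\top & 1\end{pmatrix},\qquad
D_{11}^N=\begin{pmatrix} D_{11}^{N-1} & 0\\ 0 & d_N\end{pmatrix},
\]
where $\ell^\top$ is the $N$-th row of $L$ restricted to columns $0,\dots,N-1$, and $d_N=2^{-N-\frac12}$ by \eqref{eq:entries_cholesky_gram_D}. Multiplying out the blocks gives three identities:
\[
A_{11}^{N-1}=L_{11}^{N-1}D_{11}^{N-1}(L_{11}^{N-1})^\top,\qquad
X=L_{11}^{N-1}D_{11}^{N-1}\ell,\qquad
a_{NN}=\ell^\top D_{11}^{N-1}\ell+d_N.
\]

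Substituting these into the definition of the Schur complement, and using the invertibility of $L_{11}^{N-1}$ (unit triangular) and of $D_{11}^{N-1}$ (positive diagonal), gives
\[
X^\top (A_{11}^{N-1})^{-1}X=\ell^\top D_{11}^{N-1}(L_{11}^{N-1})^\top (L_{11}^{N-1})^{-\top}(D_{11}^{N-1})^{-1}(L_{11}^{N-1})^{-1}L_{11}^{N-1}D_{11}^{N-1}\ell=\ell^\top D_{11}^{N-1}\ell.
\]
Hence $A_{11}^N/A_{11}^{N-1}=a_{NN}-\ell^\top D_{11}^{N-1}\ell=d_N=2^{-N-\frac12}$. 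An equivalent route is the determinant ratio $\det A_{11}^N/\det A_{11}^{N-1}=\prod_{k\le N}d_k/\prod_{k\le N-1}d_k=d_N$, which I would mention as a cross-check.

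The only point requiring care, and the one I regard as the main obstacle (a mild one), is justifying that the infinite factorization restricts to the finite leading block. I would handle it by the triangularity argument above: lower-triangularity of $L$ kills all contributions from indices $k>N$.
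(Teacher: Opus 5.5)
Your argument is correct, but it takes a different and shorter route than the paper.

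\textbf{The paper's route.} It works with the infinite off-diagonal blocks. It writes the Schur complement as $a_{NN}-\bigl(A_{21}^{N-1}(A_{11}^{N-1})^{-1}A_{12}^{N-1}\bigr)_{NN}$. It then takes the entrywise formula for $A_{21}A_{11}^{-1}A_{12}$ from the proof of Proposition~\ref{prop:maximum_absolute_entry}, which goes through $Y_{21}L_{21}^\top$, the Gram kernel matrix $Z$ and Lemma~\ref{lemma:equation_some_2}. Next it uses the hypergeometric identity of Lemma~\ref{lem:hypergeometric_identity} to rewrite $a_{nm}-(\cdots)_{nm}$ as a tail sum over $k\ge N$. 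At $n=m=N$ only the $k=N$ term survives, giving $a_{NN}\,\frac{(N!)^2\,2^N}{(2N)!}$, which equals $2^{-N-\frac12}$ after inserting \eqref{eq:aNN}.

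\textbf{Your route.} You stay entirely inside the finite leading block. You use the standard fact that the last pivot of an $LDL^\top$ factorization is the Schur complement of the preceding leading block. This needs only two ingredients: that $L$ is unit lower triangular, and the value $d_N$. It does not need the explicit entries $l_{nm}$, the matrix $Z$, the hypergeometric identity, or any manipulation of infinite block products. Your restriction argument, that lower triangularity of $L$ confines each sum to $k\le\min(n,m)$, is exactly what makes this legitimate. Your determinant cross-check, $\det A_{11}^N=\prod_{k=0}^N 2^{-k-\frac12}$, is also a correct by-product.

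\textbf{What each buys.} The paper's detour yields the whole infinite Schur complement $A_{22}-A_{21}A_{11}^{-1}A_{12}$ entrywise, through the coefficients $C_2(N,n,m)$. It reuses this in Proposition~\ref{prop:maximum_absolute_entry}. For the present lemma only the $(N,N)$ entry matters, and your argument delivers it directly and with fewer places for indexing errors.
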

\begin{proof}
We have
\[
\begin{aligned}
A_{11}^{N} / A_{11}^{N-1} 
&= a_{NN} - X^\top (A_{11}^{N-1})^{-1} X \\
&= a_{NN} - \left(A_{21}^{N-1} (A_{11}^{N-1})^{-1} A_{12}^{N-1} \right)_{NN}. \\
\end{aligned}
\]
Since, for $n,m \ge N$ such that $n+m$ is even,
\[
\begin{aligned}
  \left(A_{21}^{N} (A_{11}^{N})^{-1} A_{12}^{N} \right)_{nm}
=& \, a_{nm}
  \frac{(\frac{n+m}{2})! n!m!}{(n+m)!} 
  \sum_{\substack{k=0\\ k\equiv n\,(\mathrm{mod}\,2)}}^{N}
  \frac{2^k}{k!(\frac{n-k}{2})!(\frac{m-k}{2})!}. \\
\end{aligned}
\]
Given by Lemma~\ref{eq:identity_hypergeometric_series}, we have
\[
\begin{aligned}
  a_{nm} - \left(A_{21}^{N} (A_{11}^{N})^{-1} A_{12}^{N} \right)_{nm}
=& \, a_{nm}
  \frac{(\frac{n+m}{2})! n!m!}{(n+m)!} 
  \sum_{\substack{k=N+1\\ k\equiv n\,(\mathrm{mod}\,2)}}^{\min(m,n)}
  \frac{2^k}{k!(\frac{n-k}{2})!(\frac{m-k}{2})!} \\ 
\end{aligned}
\]
Therefore
\[
\begin{aligned}
A_{11}^{N} / A_{11}^{N-1} 
&= a_{NN}
  \frac{(\frac{N+N}{2})! N!N!}{(N+N)!}
  \frac{2^N}{N!(\frac{N-N}{2})!(\frac{N-N}{2})!} \\
&= a_{NN} \dfrac{N!N!2^N}{(2N)!}
\end{aligned}
\]
Since
\[
a_{NN} = 2^{-2N-\frac12} \dfrac{(2N)!}{N!N!},
\]
as given by Lemma~\ref{eq:aNN},
we have
\[
  A_{11}^{N} / A_{11}^{N-1}
= 2^{-N-\frac12}.
\]
\end{proof}

\section{Conclusion}
\label{sec:conclusion}
In this work, we focus on the Gram matrix 
which arises from standard inner products of 
AW Hermite functions. 
We derive explicit expressions for the coefficents of the infinite Gram matrix, 
and its Cholesky decomposition.
We also show that the factorization of the infinite Gram matrix
can be interpreted on a scaled Bargmann–Fock basis.
Then we study the finite Gram matrix 
and show the decay properties of the finite Gram matrix.
We study the smallest eigenvalue $\lambda_N$ of the finite Gram matrix 
and prove that $\lambda_N$ has exponential decay to zero.
We also investigate the exponential decay of 
the Schur complement of the finite Gram matrix.
These properties may be used in the numerical analysis of 
the Galerkin spectral method based on the AW Hermite functions 
applied to the VP system. 

In addition to the direct applications to numerical analysis described above, 
the results obtained for the Gram matrix associated with the standard inner product of the AW Hermite functions 
may further enrich the theory of orthogonal polynomials and Gram matrices. 
They may also have broader implications for their many applications in approximation theory.

\appendix

\section{Technical lemmas}

\subsection{Trace estimate}

\begin{lemma}\label{lem:technical_lemma_1}
For a large integer $N$,
we have
\[
  tr(A_{11}^{-1}) 
\sim 
  \frac32 \sqrt{\frac{3}{\pi}} N^{-\frac12} 3^N.
\]
\end{lemma}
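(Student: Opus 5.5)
The plan is to compute the diagonal of $A_{11}^{-1}$ exactly from the factorization of Section~\ref{sec:infinite-gram-matrix}, reduce the trace to a single sum of central-trinomial-type numbers, and finish with a Laplace (saddle point) estimate.

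\textbf{Step 1: exact trace formula.} Since $L$ is lower triangular and $D$ is diagonal, the leading $(N+1)\times(N+1)$ block of $A=LDL^\top$ is
\[
A_{11}=L_{11}D_{11}L_{11}^\top .
\]
Because $Z$ is upper triangular with $ZL^\top=I$, we have $Z_{11}=(L_{11}^\top)^{-1}$. Hence $A_{11}^{-1}=Z_{11}D_{11}^{-1}Z_{11}^\top$, which is the finite analogue of Lemma~\ref{lem:decomposition of the inverse Gram matrix} and is now a genuine matrix identity. Reading off the diagonal with Definition~\ref{defi:gram_kernel_matrix} and \eqref{eq:entries_cholesky_gram_D}, and writing $k=n+2j$, gives
\[
(A_{11}^{-1})_{nn}=\sum_{\substack{n\le k\le N\\ k\equiv n\,(\mathrm{mod}\,2)}} z_{nk}^2\,2^{k+\frac12}
=\sqrt2\sum_{\substack{n\le k\le N\\ k\equiv n\,(\mathrm{mod}\,2)}} 2^{k}\,\frac{k!}{n!\,16^{j}(j!)^2}.
\]
Summing over $n$ and exchanging the order of summation then yields
\[
\operatorname{tr}(A_{11}^{-1})=\sqrt2\sum_{k=0}^{N}2^kS_k,\qquad S_k:=\sum_{j=0}^{\lfloor k/2\rfloor}\binom{k}{2j}\binom{2j}{j}16^{-j}.
\]

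\textbf{Step 2: asymptotics of $S_k$.} I would recognize $S_k$ as the constant term in $t$ of $\bigl(1+\tfrac14(t+t^{-1})\bigr)^k$. With $t=e^{i\theta}$ this becomes
\[
S_k=\frac1{2\pi}\int_{-\pi}^{\pi}\Bigl(1+\tfrac12\cos\theta\Bigr)^k\,d\theta .
\]
The integrand is maximal only at $\theta=0$, where it equals $(3/2)^k$. Near $\theta=0$,
\[
\log\bigl(1+\tfrac12\cos\theta\bigr)=\log\tfrac32-\tfrac{\theta^2}{6}+O(\theta^4).
\]
A standard Laplace argument (split at $|\theta|=k^{-1/3}$ and bound the tail by $(3/2-c\,k^{-2/3})^k$) gives
\[
S_k\sim\Bigl(\tfrac32\Bigr)^k\frac1{2\pi}\sqrt{\frac{6\pi}{k}}=\Bigl(\tfrac32\Bigr)^k\sqrt{\frac{3}{2\pi k}},
\qquad\text{so}\qquad 2^kS_k\sim 3^k\sqrt{\frac{3}{2\pi k}} .
\]

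\textbf{Step 3: summation.} The terms $2^kS_k\sim 3^k\sqrt{3/(2\pi k)}$ grow geometrically with ratio $3$. The sum is therefore dominated by the last few terms: the indices $k\le N-M$ contribute $O(3^{N-M})$ relative to $3^N N^{-1/2}$, and letting $M\to\infty$ slowly makes this negligible. This gives
\[
\sum_{k=0}^N 2^kS_k\sim 3^N\sqrt{\frac{3}{2\pi N}}\sum_{i\ge0}3^{-i}=\frac32\,3^N\sqrt{\frac{3}{2\pi N}} .
\]
Multiplying by $\sqrt2$ yields $\operatorname{tr}(A_{11}^{-1})\sim\frac32\sqrt{\frac3\pi}\,N^{-\frac12}3^N$, as claimed.

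\textbf{Main obstacle.} The only delicate point is making the Laplace asymptotics of $S_k$ uniform enough to sum over $k$. I would handle it with explicit relative error bounds $S_k=(3/2)^k\sqrt{3/(2\pi k)}\,(1+O(k^{-1}))$ together with the geometric tail estimate. Step~1 is routine once one notes that the leading blocks of triangular factors multiply correctly.
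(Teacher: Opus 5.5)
Your proof is correct. Its first step coincides with the paper's: both reduce to the exact identity $\operatorname{tr}(A_{11}^{-1})=\sqrt2\sum_{k=0}^N2^kS_k$ via $A_{11}^{-1}=Z_{11}D_{11}^{-1}Z_{11}^\top$. The paper reaches it by cyclicity, $\operatorname{tr}(D_{11}^{-1}Z_{11}^\top Z_{11})$. It then calls the diagonal entries $2^{i+\frac12}\sum_j z_{ji}^2$ of that matrix ``$(A_{11}^{-1})_{ii}$''. That is not literally true: for instance $(A_{11}^{-1})_{NN}=2^{N+\frac12}$, since $z_{NN}=1$. The slip is harmless for the trace, and your route (true diagonal, then exchange the sums) avoids it.

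The genuine difference is in the asymptotics of $S_k$. The paper treats only even $k=2m$ and applies Stirling's formula to each summand. It replaces the sum over $j=xm$ by $m\int_0^1b_m(x)\,dx$ and applies interior Laplace to the phase $p_1(x)=-2(1-x)\ln(1-x)-2x\ln(2x)$, which peaks at $x_0=1/3$ with $p_1(x_0)=\ln\frac94$. The odd case is only asserted to be analogous.

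Your constant-term representation $S_k=\frac1{2\pi}\int_{-\pi}^{\pi}(1+\frac12\cos\theta)^k\,d\theta$ turns $S_k$ into an honest Laplace integral with a single nondegenerate maximum. It handles both parities at once. It also avoids the soft spots of the paper's route: Stirling is not uniform as $x\to0,1$, and the sum-to-integral replacement is not justified there. Both routes give $2^kS_k\sim3^k\sqrt{3/(2\pi k)}$, and the final geometric summation is the same.

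Finally, the ``main obstacle'' you flag is not really one. The terms are positive and $\sum_k c_k$ with $c_k=3^k\sqrt{3/(2\pi k)}$ diverges. So $2^kS_k\sim c_k$ alone gives $\sum_{k\le N}2^kS_k\sim\sum_{k\le N}c_k\sim\frac32c_N$, and no explicit $O(k^{-1})$ error term is needed.
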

\begin{proof}
We consider the case that $N$ is even, i.e. $N = 2M$ with $M\in\mathbb{N}$.
Since
\[
  \operatorname{tr} \left( A_{11}^{-1}  \right)
= \operatorname{tr} \left( Z_{11} D_{11}^{-1} Z_{11}^\top \right)
= \operatorname{tr} \left( D_{11}^{-1} Z_{11}^\top Z_{11} \right),
\]
it follows that
\[
\begin{aligned}
  \operatorname{tr} \left( A_{11}^{-1} \right)
&= \sum_{i=0}^N     2^{i+\frac12}    \sum_{j=0}^N z_{ji}^2 \\
&= \sum_{m=0}^{M-1} 2^{2m+1+\frac12} \sum_{r=0}^{M-1} z_{2r+1,2m+1}^2
 + \sum_{m=0}^M     2^{2m+\frac12}    \sum_{r=0}^M z_{2r,2m}^2. 
\end{aligned}
\]
We now consider the contribution from the even-indexed diagonal entries, 
corresponding to the second sum above. For each $r=0,\ldots,M$,
\[
\begin{aligned}
  \sum_{r=0}^M z_{2r, 2m}^2
&=\sum_{r=0}^M \dfrac{(2m)!}{(2r)!16^{m-r}((m-r)!)^2} \\
&=\sum_{r=0}^m \dfrac{(2m)!}{(2r)!16^{m-r}((m-r)!)^2} \\
&=\sum_{k=0}^m \dfrac{(2m)!}{(2m-2k)!16^{k}(k!)^2}.
\end{aligned}
\]
The third equality holds since $z_{2r,2m} = 0$ for all $r > m$. 
The last equality follows by setting $k = m - r$.
Let $k = x m$, $0 < x < 1$  and define the following function 
\[
b_m(x) := \dfrac{(2m)!}{(2m(1-x))!16^{xm}((xm)!)^2},
\]
and using Stirling's formula
\[
\ln(m!) = m\ln m - m + \dfrac{1}{2} \ln (2\pi m) + \mathcal{O}(m^{-1}),
\]
we obtain
\[
  b_m(x)
= p_2(x) \exp( m \,p_1(x) ),
\]
where 
\[
p_1(x) = -2(1-x)\ln(1-x) - 2x\ln (2x)
\]
and 
\[
  p_2(x) 
= \dfrac{1}{2\pi m x\sqrt{1-x}} + 
  \exp\left(
  \mathcal{O}\left(
  \dfrac{1}{m}\left(
  \dfrac{1}{2}
- \dfrac{1}{2(1-x)}
- \dfrac{1}{x}\right)\right)\right).
\]
The leading behaviour of this sum for large m is 
in turn found by replacing the sum by an integral 
and by applying interior Laplace's method, 
which in this context may be stated as :
if for $x \in (a,b)$, the real continuous function $g(x)$ has 
its maximum the value $g(x_0)$ at an interior point $x_0$, then as $m\to\infty$
\[
  \int_a^b f(x) e^{m g(x)} dx 
\sim f(x_0) e^{m g(x_0)}
  \sqrt{\dfrac{2\pi}{m|g^{\prime\prime}(x_0)|}}.
\]
Applying the method gives
\[
\begin{aligned}
      \sum_{k=0}^m b_m(k/m) 
\sim  m \int_0^1 b_m(x) dx 
&\sim m \, p_2(x_0) \exp(m p_1(x_0))
      \sqrt{\dfrac{2\pi}{m|p_1^{\prime\prime}(x_0)|}} \\
&=    \left(\dfrac{3}{4}\right)^{\frac12} \pi^{-\frac12}
      m^{-\frac12}\left(\dfrac{9}{4}\right)^m.
\end{aligned}
\]
We can also prove for the case where the contribution from the odd-indexed diagonal entries. 
Thus we obtain, for $i$ is very large,
\[
  \left( A_{11}^{-1} \right)_{ii}
= 2^{i+\frac12} \sum_{j=0}^N z_{ji}^2
\sim 
  \sqrt{\frac{3}{\pi}} i^{-\frac12} 3^i.
\]
Collecting all entries on the diagonal gives
\[
  tr(A_{11}^{-1}) 
\sim 
  \frac32 \sqrt{\frac{3}{\pi}} N^{-\frac12} 3^N.
\]
\end{proof}

\subsection{Asymptotic behavior in the bulk region}

\begin{lemma}\label{lem:technical_lemma_2}
Fix a positive constant $\alpha$. 
Then, for a large integer $N$ and indices $i, j$ in the bulk region
\begin{equation}\label{eq:bulk_region}
N - \alpha N^{1/4} \le i,j \le N,
\end{equation}
such that $N-i$ and $N-j$ are even,
we have
\[
(A_{11}^{-1})_{ij}
\sim
\sqrt{(A_{11}^{-1})_{ii}\,(A_{11}^{-1})_{jj}}.
\]
Equivalently,
\[
(A_{11}^{-1})_{ij} \sim 2^{N+\frac12} z_{iN} z_{jN}, \qquad
(A_{11}^{-1})_{ii} \sim 2^{N+\frac12} z_{iN}^2, \qquad
(A_{11}^{-1})_{jj} \sim 2^{N+\frac12} z_{jN}^2,
\]
where $(z_{iN})$ are entries of the Gram kernel matrix.
\end{lemma}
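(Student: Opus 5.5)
The plan is to write $(A_{11}^{-1})_{ij}$ as an explicit finite sum and show that its last term, $k=N$, dominates in the bulk region~\eqref{eq:bulk_region}.

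\textbf{Step 1: factor the finite block.} Since $L$ is lower triangular, $L_{12}=0$. Restricting $A=LDL^\top$ to the first $N+1$ indices therefore gives $A_{11}=L_{11}D_{11}L_{11}^\top$ exactly, with no contribution from the tail. The matrices $Z$ and $L^\top$ are both upper triangular and satisfy $ZL^\top=I$. Their $(1,1)$ blocks therefore satisfy $Z_{11}L_{11}^\top=I$, so $(L_{11}^\top)^{-1}=Z_{11}$. It follows that
\[
A_{11}^{-1}=Z_{11}D_{11}^{-1}Z_{11}^\top,
\qquad
(A_{11}^{-1})_{ij}=\sum_{k=\max(i,j)}^{N} 2^{k+\frac12}\, z_{ik}\,z_{jk}.
\]
This is a genuine finite identity, unlike the formal identity of Lemma~\ref{lem:decomposition of the inverse Gram matrix}. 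A nonzero term requires $k-i$ and $k-j$ to be even. Because $N-i$ and $N-j$ are even, the surviving indices are $k=N-2s$ with $s=0,1,\dots$. All terms are nonnegative, and the $s=0$ term is exactly $2^{N+\frac12}z_{iN}z_{jN}$, which is nonzero.

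\textbf{Step 2: bound the ratio of each term to the leading one.} Write $p=(N-i)/2$ and $q=(N-j)/2$, so that $p,q\le \alpha N^{1/4}/2$. From Definition~\ref{defi:gram_kernel_matrix}, for $s\le \min(p,q)$,
\[
\frac{z_{i,N-2s}}{z_{iN}}
=\sqrt{\frac{(N-2s)!}{N!}}\;4^{s}\,\frac{p!}{(p-s)!}
\le \frac{(4p)^s}{(N-2s+1)^{s}}.
\]
The analogous bound holds for $j$ with $q$ in place of $p$. Hence the ratio of the $k=N-2s$ term to the $k=N$ term is at most
\[
2^{-2s}\,\frac{(16pq)^s}{(N-2s+1)^{2s}}
\le \left(\frac{C\alpha^2N^{1/2}}{N^2}\right)^{s}
= \left(C\alpha^2 N^{-3/2}\right)^{s},
\]
where the bound on $N-2s+1$ from below uses $s\le \alpha N^{1/4}/2\ll N$. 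Summing this geometric series over $s\ge1$ shows that
\[
(A_{11}^{-1})_{ij}=2^{N+\frac12}z_{iN}z_{jN}\bigl(1+O(N^{-3/2})\bigr),
\]
uniformly in the bulk region.

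\textbf{Step 3: conclude.} Taking $i=j$ in Step 2 gives $(A_{11}^{-1})_{ii}\sim 2^{N+\frac12}z_{iN}^2$, and similarly for $j$. Therefore
\[
\sqrt{(A_{11}^{-1})_{ii}(A_{11}^{-1})_{jj}}\sim 2^{N+\frac12}z_{iN}z_{jN}\sim (A_{11}^{-1})_{ij},
\]
which is the claim.

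\textbf{Expected obstacle.} The main point needing care is Step 2. The bound must be uniform in $s$ up to $\min(p,q)$. It must also handle the factor $\sqrt{(N-2s)!/N!}$ cleanly; this is where the restriction $i,j\ge N-\alpha N^{1/4}$ enters, since it guarantees $pq/N^2\to0$. Any bulk width of size $o(N)$ would in fact suffice for this argument.
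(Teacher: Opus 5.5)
Your proof is correct and follows the same route as the paper: you expand $(A_{11}^{-1})_{ij}=\sum_{k}2^{k+\frac12}z_{ik}z_{jk}$ over $k=N-2s$ and show that the $k=N$ term dominates, with relative corrections of order $N^{-3/2}$. Your explicit inequalities $p!/(p-s)!\le p^s$ and $N!/(N-2s)!\ge (N-2s+1)^{2s}$ make the uniformity in $s$ cleaner than the paper's Stirling-based $\mathcal{O}(N^{-3r/2})$ estimate, and your observation that $A_{11}^{-1}=Z_{11}D_{11}^{-1}Z_{11}^\top$ holds exactly (because $L_{12}=0$) justifies a step the paper uses without comment.
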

\begin{proof}
Suppose
\[
N-\alpha N^{1/4}\le i,j\le N,
\]
where \(\alpha>0\) is fixed. 
Equivalently,
\(i = N-a, j = N-b,\)
with
\(
0\le a,b\le \alpha N^{1/4},
\)
that is,
\(
a,b=\mathcal{O}(N^{1/4}).
\)
Writing
\(
k=N-2r,
\)
the sum becomes
\[
  (A_{11}^{-1})_{ij}
= \sum_{\substack{k=\mathrm{max}(i,j) \\ 
	          i\equiv k\,(\mathrm{mod}\,2)}}^{N}
  \dfrac{z_{ik} z_{jk}}{d_k}
= \sum_{r=0}^{\mathrm{min}(\frac{a}2, \frac{b}2)} w_r^{(ij)},
\]
where
\[
w_r^{(ij)}
=
2^{N-2r+\frac12}
\frac{(N-2r)!}
{\sqrt{i!\,j!}}
\frac{1}
{4^{\frac{a-2r}{2}}
\left(\frac{a-2r}{2}\right)!
\,4^{\frac{b-2r}{2}}
\left(\frac{b-2r}{2}\right)!},
\]
Since $ a, b =  \mathcal{O}(N^{\frac14}) $,
there are only $\mathcal{O}(N^{\frac14}) $ admissible values of \(r\).
Since, using Stirling's formula,
\[
  \frac{(N-2r)!}{N!}
= N^{-2r} \left( 1+\mathcal{O}(N^{-\frac12}) \right),
\]
and similarly for $\frac{N!}{i!}$ and $\frac{N!}{j!}$,
we obtain
\[
  \frac{(N-2r)!}{\sqrt{i!\,j!}}
= N^{\frac{a+b}{2}-2r}
  \left(1+\mathcal{O}(N^{-\frac12})\right).
\]
Hence
\[
  w_r^{(ij)}
= 2^{N+\frac12} N^{\frac{a+b}{2}} C_r(a,b) N^{-2r}
  \left(1+\mathcal{O}(N^{-\frac12})\right),
\]
where
\[
  C_r(a,b)
= \frac{2^{-2r}}
  {4^{a/2-r}\, 4^{b/2-r}\,
  \left(\frac{a-2r}{2}\right)!
  \left(\frac{b-2r}{2}\right)!}.
\]
Since
\[
  \frac{(\frac{a}2)!}{(\frac{a-2r}2)!}
= \mathcal{O} \left(a^r\right)
= \mathcal{O} \left(N^{r/4}\right),
\]
and similarly for \(b\), it follows that
\[
  \frac{w_r^{(ij)}}{w_0^{(ij)}}
= \mathcal{O} \left(N^{-3r/2}\right).
\]
Consequently,
\[
(A_{11}^{-1})_{ij}
=
w_0^{(ij)}
\left(1+\mathcal{O}(N^{-3/2})\right).
\]
The leading contribution is
\[
  (A_{11}^{-1})_{ij}
\sim 
  2^{N+\frac12} \frac{N^{(a+b)/2}} {4^{a/2}4^{b/2}
  (a/2)!(b/2)!}
  \left(1+\mathcal{O}(N^{-\frac12})\right).
\]
Thus,
\[
(A_{11}^{-1})_{ij}
\sim
\sqrt{(A_{11}^{-1})_{ii}(A_{11}^{-1})_{jj}},
\qquad
N-\alpha N^{1/4}\le i,j\le N.
\]
Equivalently,
\[
(A_{11}^{-1})_{ij} \sim 2^{N+\frac12} z_{iN} z_{jN}, \qquad
(A_{11}^{-1})_{ii} \sim 2^{N+\frac12} z_{iN}^2, \qquad
(A_{11}^{-1})_{jj} \sim 2^{N+\frac12} z_{jN}^2,
\]
showing that the last contribution \(k=N\) 
asymptotically determines all three quantities. 
\end{proof}

\section{Application to the VP system}
\label{sec:application-to-the-vp-system}
We briefly describe the Galerkin spectral method based on AW Hermite functions 
to the one-dimensional VP system. 
Let $f$ be the solution of the VP system with spectral expansion
\[
  f(t,x,v)   
= \sum_{n=0}^{\infty} \widehat{u}_n(t,x) \psi_n(v),
\]
where $(\widehat{u}_n(t,x))_{n\ge 0}$ are 
Hermite coefficients~\cite{fok2002combined}.
The solution $f$ satisfies
\begin{equation}\label{eq:vp-system}
\left\{
\begin{aligned}
&\partial_t f + v\partial_x f + E(t,x) \partial_v f = 0, \\
&\partial_x E(t,x) = \rho - \rho_0,
\end{aligned}
\right.
\qquad
\rho(t,x) = \int_{\mathbb R} f(t,x,v)\,dv,
\end{equation}
with periodic boundary conditions in 
$x\in \Omega_x = ]0,x_{\text{max}}]$ and $v\in\mathbb R$. 
$E(t, x)$ is the electrostatic field, and $\rho(tx)$ is the density.
The constant $\rho_0$ is chosen to satisfy the quasi-neutrality condition
\[
\int_{\Omega_x} (\rho-\rho_0)\,dx=0.
\]
This formulation follows the model considered in the reference work 
\cite{armstrong1966numerical,filbet2003comparison,schumer1998vlasov,gagne1977splitting,kormann2021generalized,bessemoulin2022stability,bessemoulin2023convergence,manzini2017convergence,funaro2021stability,pagliantini2023physics}.

The Galerkin method is obtained by using the same AW Hermite functions as trial and test functions
\begin{equation}\label{eq:hermite-expansion}
  f_N(t,x,v)
= \sum_{m=0}^N u_m(t,x) \psi_m(v).
\end{equation}
Inserting the expansion~\eqref{eq:hermite-expansion} 
into the VP system~\eqref{eq:vp-system}, 
using the recurrence relations~\eqref{recursive_relation_v} 
and~\eqref{recursive_relation_partial_v},
and testing the Vlasov equation with $\psi_n$, 
the semi-discrete system reads
\begin{equation}\label{eq:the-semi-discrete-system}
\begin{aligned}
  \sum_{m=0}^N a_{nm}\partial_tu_m
&+\sqrt{\frac{T}{2}} \sum_{m=0}^N \left( 
  \sqrt{m}   a_{n,m-1}
 +\sqrt{m+1} a_{n,m+1}
  \right) \partial_x u_m \\
&-\sqrt{\frac{2}{T}} \,E_N
  \sum_{m=0}^N \sqrt{m+1} a_{n,m+1}u_m
 =0, \qquad n=0,\ldots,N,
\end{aligned}
\end{equation}
where $E_N$ is coupled to the approximation of the density 
through the Poisson equation. 
In contrast to the Petrov--Galerkin method, the AW Hermite functions 
are not orthogonal in the standard $L^2(\mathbb R)$ inner product. 
Consequently, the Galerkin method introduces 
the blocks $A_{11}$ and $A_{12}$ of the Gram matrix $A$.
We show this in the following.
Introducing
\[
U_1= (u_0,\ldots,u_N)^\top,
\]
the semi-discrete system~\eqref{eq:the-semi-discrete-system} 
can be expressed equivalently as
\begin{equation}\label{eq:the-semi-discrete-system-matrix} 
\begin{aligned}
  A_{11} \partial_t U_1
&+\sqrt{\frac{T}{2}}     (A_{11} B_{11} + A_{12} B_{21}) \partial_x U_1 \\
&-\sqrt{\frac{2}{T}} E_N (A_{11} D_{11} + A_{12} D_{21}) U_1
= 0,
\end{aligned}
\end{equation}
together with the discretized Poisson equation. 
In this appendix, $B$ represents the transport matrix 
and $D$ the acceleration matrix.
The infinite triangular and sparse matrices are given by
$B, D \in \mathbb{R}^{\mathbb{N}\times\mathbb{N}}$
\[
\begin{aligned}
&B = (b_{mn})_{m,n\geq 0}, \quad b_{mn} = \sqrt{m}\,\delta_{m-1,n} + \sqrt{m+1}\,\delta_{m+1,n}, \\
&D = (d_{mn})_{m,n\geq 0}, \quad d_{mn} = \sqrt{m}\,\delta_{m-1,n}.
\end{aligned}
\]
$B_{11}$, $B_{21}$ represents the blocks of $B$ and 
$D_{11}$, $D_{21}$ the blocks of $D$.

This formulation provides the Galerkin spectral method 
to the VP system in velocity space. 
The particular treatment of the resulting semi-discrete system, 
including the handling of the Gram matrix 
and the analysis of the conservation properties, 
is presented in the following.

\subsection{Sparse structure}
Since the Gram matrix arising from the Galerkin method is dense, 
its direct use can incur substantial computational costs. 
To address this issue, we derive an equivalent formulation 
that preserves the sparsity structure.
We can equivalently write 
the semi-discrete system~\eqref{eq:the-semi-discrete-system-matrix} as
\begin{equation}\label{eq:the-semi-discrete-system-matrix-A11}
\begin{aligned}
   \partial_t U_1
+ &\sqrt{\dfrac{T}{2}}
   (B_{11} + A_{11}^{-1}A_{12}B_{21}) \partial_x U_1 \\
- &\sqrt{\dfrac{2}{T}} E_N
   (D_{11} + A_{11}^{-1}A_{12}D_{21}) U_1
= 0.
\end{aligned}
\end{equation}
Using Lemma~\ref{lemma:equation_some_2}
in the system~\eqref{eq:the-semi-discrete-system-matrix-A11},
we obtain
\begin{equation}\label{eq:the-semi-discrete-system-matrix-Z12}
\begin{aligned}
   \partial_t U_1
+ &\sqrt{\dfrac{T}{2}}
   (B_{11} - Z_{12}L_{22}^{\top} B_{21}) \partial_x U_1 \\
- &\sqrt{\dfrac{2}{T}} E_N
   (D_{11} - Z_{12}L_{22}^{\top} D_{21}) U_1
= 0.
\end{aligned}
\end{equation}
Since $B_{21}$ (or $D_{21}$) has only one nonzero entry,
located in its first row, last column,
the matrix $B_{11} - Z_{12}L_{22}^{\top}B_{21}$
(or $D_{11} - Z_{12}L_{22}^{\top}D_{21}$)
differs from $B_{11}$ (or $D_{11}$) only in its last column.
Thus, the system~\eqref{eq:the-semi-discrete-system-matrix-Z12}
preserves the sparsity structure
of the classical Petrov–Galerkin method,
apart from the additional nonzero entries in the last column.
From a numerical standpoint,
the the system~\eqref{eq:the-semi-discrete-system-matrix-Z12} 
therefore avoids the direct use of the dense Gram matrix
while introducing only a negligible additional computational cost.

\subsection{Sharper estimate}
For spectral methods applied to the VP system, 
obtaining a sharp estimate for the highest moment $u_N$ is of particular interest. 
Assume that
\[
\begin{aligned}
  \left\| \sum_{n=0}^N \left( u_n - \widehat{u}_n \right) \psi_n 
  \right\|_{L^2(dvdx)}
&\le
  \|f_N-f\|_{L^2(dv\,dx)}
  +\left\| \sum_{n=N+1}^{\infty}\widehat{u}_n\psi_n \right\|_{L^2(dv\,dx)} \\
&\le g(N),
\end{aligned}
\]
where $g(N)\to0$ as $N\to\infty$.
Here, \(L^2(dv\,dx)\) denotes the space of measurable functions 
that are square-integrable with respect to 
the product measure \(dv\,dx\), 
equipped with the norm
$$
   \|f\|_{L^2(dv\,dx)}
:= \left( \int_{\Omega_x} \int_{\mathbb{R}}
   |f(v,x)|^2 \,dv\,dx \right)^{\frac12}.
$$
$\lambda_N$ is the smallest eigenvalue of the Gram matrix $A_{11}^N$ and 
its estimate is given by Lemma~\ref{lem:asymptotics-of-the-smallest-eigenvalue}.
We have 
\[
  \left\| \sum_{n=0}^N \left( u_n - \widehat{u}_n \right) \psi_n 
  \right\|_{L^2(dvdx)}
\ge 
  \lambda_N \int_{\Omega_x} ( u_N - \widehat{u}_N )^2 \,dx.
\]
Applying the Cauchy--Schwarz inequality,
\[
  \left( \int_{\Omega_x} |u_N-\widehat{u}_N| \,dx \right)^2
\le 
  |\Omega_x| \int_{\Omega_x}  (u_N-\widehat{u}_N)^2\,dx,
\]
yields
\[
  \left( \int_{\Omega_x} |u_N-\widehat{u}_N| \,dx \right)^2
  \le \frac{|\Omega_x| \,g^2(N)}{\lambda_N}.
\]
Alternatively, using the Schur complement, we have 
\[
\begin{aligned}
  \left\| \sum_{n=0}^N
  \left( u_N - \widehat{u}_N \right) \psi_n \right\|_{L^2(dvdx)}^2
&=
  \int_{\Omega_x}
  \left( U_1 - \widehat{U}_1 \right)^\top A_{11}^N
  \left( U_1 - \widehat{U}_1 \right) \,dx \\
&\geq
  \int_{\Omega_x} 
  A_{11}^{N} / A_{11}^{N-1} (u_N - \widehat{u}_N)^2 \,dx \\
&\geq 
  \dfrac{A_{11}^{N} / A_{11}^{N-1}}{|\Omega_x|} 
  \left( \int_{\Omega_x} |u_N - \widehat{u}_N| \,dx \right)^2.
\end{aligned}
\]
Consequently, as given by Lemma~\ref{lem:schur-complement-A11}, we have
\[
\begin{aligned}
  \left( \int_{\Omega_x} |u_N - \widehat{u}_N| \,dx \right)^2
&\leq 
  \dfrac{|\Omega_x| \,g^2(N)}{A_{11}^{N} / A_{11}^{N-1}}.
\end{aligned}
\]
Hence, the estimate obtained from the Schur complement 
is sharper than the one based on the smallest eigenvalue of the Gram matrix.

\end{document}